%
%

\documentclass[seceqn,MSNbibl,number,citesort,dvips]{arxbj}


\aid{0}
\volume{20}
\issue{3}
\pubyear{2014}
\firstpage{1404}
\lastpage{1431}
\doi{10.3150/13-BEJ527} 

\makeatletter
\newcommand{\eqref}[1]{(\ref{#1})}
\newcommand{\ignore}[1]{}{}

\def\E{{\mathbb{E}}}
\def\Var{\mathrm{Var}}
\def\cov{\mathrm{Cov}}
\def\P{{\mathbb{P}}}

\newtheorem{theorem}{Theorem}[section]
\newtheorem{lem}[theorem]{Lemma}
\newproclaim{deff}[theorem]{Definition}
\newtheorem{cor}[theorem]{Corollary}
\newproclaim{prop}[theorem]{Proposition}
\newremark{rem}[theorem]{Remark}

\newcommand{\Binomial}{\operatorname{Binomial}}
\def\sfrac#1#2{#1/#2}

\def\afrac#1#2{#1/(#2)}

\def\sklfrac#1#2{(#1/#2)}
\makeatother

\begin{document}
\begin{frontmatter}

\title{Discretized normal approximation by Stein's method}
\runtitle{Discretized normal approximation}

\begin{aug}
\author{\inits{X.}\fnms{Xiao} \snm{Fang}\ead[label=e1]{stafx@nus.edu.sg}}
\runauthor{X. Fang} 
\address{Department of Statistics and Applied Probability,
National University of Singapore,
6 Science Drive 2,
Singapore 117546,
Republic of Singapore. \printead{e1}}
\end{aug}

\received{\smonth{1} \syear{2012}}
\revised{\smonth{2} \syear{2013}}

%
\begin{abstract}
We prove a general theorem to bound the total variation distance
between the distribution
of an integer valued random variable of interest and an appropriate
discretized normal distribution.
We apply the theorem to $2$-runs in a sequence of i.i.d. Bernoulli
random variables, the number of
vertices with a given degree in the Erd\"os--R\'enyi random graph, and
the uniform multinomial occupancy model.
\end{abstract}

%
\begin{keyword}
\kwd{discretized normal approximation}
\kwd{exchangeable pairs}
\kwd{local dependence}
\kwd{size biasing}
\kwd{Stein coupling}
\kwd{Stein's method}
\end{keyword}

\end{frontmatter}
%
\section{Introduction and the main result}
Let $S$ be a sum of independent random variables.
The Berry-Esseen theorem gives a bound on the Kolmogorov distance
between the distribution of $S$ and the normal distribution with the
same mean and variance as $S$.
%
\begin{theorem}[(Berry \cite{Be41}, Esseen \cite{Es42})]\label{thm1}
Assume $S=\sum_{i=1}^n X_i$ where $\{X_1,\dots, X_n\}$ are
independent random variables with $\E X_i=\mu_i$, $\Var X_i=\sigma
_i^2$, $\E|X_i-\mu_i|^3=\gamma_i$. Let $\mu=\sum_{i=1}^n \mu_i$,
$\sigma^2=\sum_{i=1}^n \sigma_i^2, \gamma=\sum_{i=1}^n \gamma_i$. Then,
%
\begin{equation}
\label{1} d_K\bigl(\mathcal{L(S)}, N\bigl(\mu,
\sigma^2\bigr)\bigr) \le c \gamma/\sigma^3,
\end{equation}
where $c$ is an absolute constant and
\begin{eqnarray*}
d_K\bigl(\mathcal{L}(X), \mathcal{L}(Y)\bigr)=\sup
_{z\in\mathbb{R}} \bigl| \P(X\le z)-\P(Y\le z) \bigr|.
\end{eqnarray*}
\end{theorem}
From \eqref{1}, if $\sigma^{-2}=\mathrm{O}(1/n)$ and $\gamma=\mathrm{o}(n^{3/2})$, then
%
\begin{equation}
\label{1.1} d_K\bigl(\mathcal{L(S)}, N\bigl(\mu,
\sigma^2\bigr)\bigr) \rightarrow0 \qquad \mbox{as } n\rightarrow\infty.
\end{equation}
A stronger distance, the total variation between two distributions, is
defined as
%
\begin{equation}
\label{1.2} d_{\mathrm{TV}}\bigl(\mathcal{L}(X), \mathcal{L}(Y)\bigr)=\sup
_{A\subset\mathbb{R}} \bigl|\P(X\in A)-\P(Y\in A)\bigr|.
\end{equation}
If $S$ is integer valued, the convergence in (\ref{1.1}) is no longer
valid under total variation distance because
%
\begin{equation}
\label{2} d_{\mathrm{TV}}\bigl(\mathcal{L}(S), N\bigl(\mu,
\sigma^2\bigr)\bigr)=1 \qquad \forall n\ge1.
\end{equation}
Equation (\ref{2}) follows by taking $A$ to be the set of integers in
the definition of total variation distance. Therefore, we need to find
limiting distributions other than $N(\mu,\sigma^2)$ if small total
variation distance is desired.
Several alternatives have been studied, e.g., translated Poisson
distribution \cite{Ro05,Ro07}, shifted binomial
distribution \cite{Ro08} and a new family of discrete distributions
\cite{GoXi06}.
A more natural limiting distribution, discretized normal distribution
$N^d(\mu,\sigma^2)$, is defined to be supported on the integer set
$\mathbb{Z}$ and have probability mass function at any integer $z\in
\mathbb{Z}$ as
%
\begin{equation}
\label{6.1-0} \P\bigl(z-\tfrac{1}{2} \le Z_{\mu,\sigma^2} <z+
\tfrac{1}{2}\bigr),
\end{equation}
where $Z_{\mu,\sigma^2}$ is a Gaussian variable with mean $\mu$ and
variance $\sigma^2$.

Using Stein's method, Chen and Leong \cite{ChLe10} (see also Theorem~7.4 of \cite{ChGoSh11}) proved a bound on $d_{\mathrm{TV}}(\mathcal{L}(S),
N^d(\mu,\sigma^2))$ for sums of independent integer valued random
variables. Stein's method was introduced by Stein \cite{St72}, and has
become an important approach in proving distributional approximations
because of its power in handling dependence within random variables. We
refer to \cite{BaCh05} for an introduction to Stein's method.

Chen and Leong \cite{ChLe10} used the zero-bias coupling approach in
Stein's method to obtain their result. In this paper, we develop a
different approach in Stein's method for discretized normal
approximation. Our approach not only recovers the result of Chen and
Leong \cite{ChLe10}, but also works for general integer valued random
variables. We work under the framework of Stein coupling, a concept
introduced by Chen and R\"ollin \cite{ChRo10} under which normal
approximation results can be proved.
%
\begin{deff}\label{6-d1}
Let $S$ be a random variable with mean $\mu$. We say a triple of
square-integrable random variables $(S,S',G)$ is a Stein coupling if
%
\begin{equation}
\label{6-d1-1} \E\bigl\{G f\bigl(S'\bigr)-G f(S)\bigr\} =\E(S-\mu)
f(S)
\end{equation}
for all $f$ such that the above expectations exist.
\end{deff}
The above definition is adapted from \cite{ChRo10} and includes many
of the coupling structures employed in Stein's method such as local
dependence, exchangeable pairs, and size biasing. These coupling
structures are discussed in Section~\ref{sec2}. Under the framework of Stein
coupling, we obtain the following theorem.
%
\begin{theorem}\label{6-t1}
Let $S$ be an integer valued random variable with mean $\mu$ and
finite variance $\sigma^2$. Suppose we can construct a Stein coupling
$(S,S',G)$. Then, with $D=S'-S$,
%
\begin{eqnarray}
\label{6-t1-1} &&d_{\mathrm{TV}}\bigl(\mathcal{L}(S), N^d\bigl(\mu,
\sigma^2\bigr)\bigr)\nonumber
\\
&&\quad \le\frac{2}{\sigma^2}\sqrt{\Var\bigl(\E(GD|S)\bigr)}+\sqrt{\frac{\pi
}{8}}
\frac{\E|GD^2|}{\sigma^3} +\frac{\sqrt{\E G^2 D^4}}{\sigma
^3}
\\
&&\qquad {} + \frac{1}{2\sigma^2}\E \bigl[ \bigl(\bigl|GD^2\bigr|+ |GD|\bigr)
d_{\mathrm{TV}} \bigl(\mathcal {L}(S| \mathcal{F}), \mathcal{L}(S+1|\mathcal{F})
\bigr) \bigr],\nonumber
\end{eqnarray}
where $\mathcal{F}$ is a $\sigma$-field such that $\sigma
(G,D)\subset\mathcal{F}$ where $\sigma(\cdot)$ denotes the $\sigma
$-field generated by a random variable.
\end{theorem}
%
\begin{rem}
The discretization defined in \eqref{6.1-0} has no loss of generality.
For example, one may define another discretized normal distribution
$\tilde{N}^d(\mu,\sigma^2)$ with probability mass function at $z$ as
\begin{eqnarray*}
\P(z\le Z_{\mu,\sigma^2}<z+1).
\end{eqnarray*}
Then,
\begin{eqnarray*}
d_{\mathrm{TV}}\bigl(N^d\bigl(\mu,\sigma^2\bigr),
\tilde{N}^d\bigl(\mu,\sigma^2\bigr)\bigr)&=&
d_{\mathrm{TV}}\bigl(N^d\bigl(\mu,\sigma^2\bigr),
N^d\bigl(\mu-\tfrac{1}{2},\sigma^2\bigr)\bigr)
\\
&\le& d_{\mathrm{TV}}\bigl(N\bigl(\mu,\sigma^2\bigr), N\bigl(\mu-
\tfrac{1}{2},\sigma^2\bigr)\bigr)
\\
&\le& c/\sigma,
\end{eqnarray*}
where $c$ is an absolute constant.
It can be seen from \eqref{1101} in the proof of Theorem~\ref{6-t1} that
the bound \eqref{6-t1-1} will only differ by a constant factor if one
changes the limiting distribution from $N^d(\mu,\sigma^2)$ to $\tilde
{N}^d(\mu,\sigma^2)$.
\end{rem}
%
\begin{rem}
The first three terms in the bound (\ref{6-t1-1}) are comparable to
those appearing in the upper bounds of the Kolmogorov or Wasserstein
distance for normal approximations (see, e.g., Corollary~2.2 of \cite{ChRo10}).
The last term in the bound (\ref{6-t1-1}) arises because we are
working in the total variation distance. It is easy to see that such a
term must appear by considering the case when $S$ has support
restricted to the even integers.
Also in bounding this term, we choose appropriate $\mathcal{F}$ so
that $d_{\mathrm{TV}} (\mathcal{L}(S|\mathcal{F}), \mathcal{L}(S+1|\mathcal
{F}))$ is relatively easy to bound, yet of the same order as $d_{\mathrm{TV}}
(\mathcal{L}(S|G, D), \mathcal{L}(S+1|G, D))$.
\end{rem}

R\"ollin and Ross \cite{RoRo12} provided a general method of bounding
$d_{\mathrm{TV}}(\mathcal{L}(V),\mathcal{L}(V+1))$ for a given integer valued
random variable $V$. It is our main tool for bounding the last term in
the bound \eqref{6-t1-1}.
%
\begin{lem}[(R\"ollin and Ross \cite{RoRo12})] \label{6-l1}
For a given integer valued random
variable $V$, if we can construct an exchangeable pair $(V,V')$ (i.e.,
$\mathcal{L}(V,V')=\mathcal{L}(V',V)$) so that $P(V-V'=1)\ne0$, then
%
\begin{eqnarray}
\label{6-l1-1} &&d_{\mathrm{TV}}\bigl(\mathcal{L}(V),\mathcal{L}(V+1)\bigr)\nonumber
\\[-8pt]\\[-8pt]
&&\quad \le\frac{\sqrt{\Var(\E(I(V-V'=1)|V))} + \sqrt{\Var(\E
(I(V-V'=-1)|V))}}{P(V-V'=1)}.\nonumber
\end{eqnarray}
\end{lem}
%
\begin{rem}\label{exchconst}
To apply Lemma~\ref{6-l1}, we need to construct exchangeable pairs
such that the bound in \eqref{6-l1-1} is small. A useful method to
construct such exchangeable pairs when $V$ is a function of independent
random variables is as follows. Suppose $V=f(X_1,\dots, X_n)$ where $\{
X_1,\dots, X_n\}$ are independent. Let $I$ be an independent uniform
random index from $\{1,\ldots,n\}$. Given $I$, let $X_I'$ be an
independent copy of $X_I$. Define $V'=f(X_1,\dots, X_I'\dots, X_n)$.
Then $(V,V')$ is an exchangeable pair. We will use this construction in
all the applications considered in this paper.\vadjust{\goodbreak}
\end{rem}

The remaining of the paper is organized as follows. In Section~\ref{sec2}, we
show the utility of Theorem~\ref{6-t1} by adapting it to local
dependence, exchangeable pairs, and size biasing, and bounding the
total variation distance for discretized normal approximations for
$2$-runs in a sequence of i.i.d. Bernoulli random variables, the number
of vertices with a given degree in the Erd\"os--R\'enyi random graph,
and the uniform multinomial occupancy model. In Section~\ref{sec3}, we give the
proof of Theorem~\ref{6-t1}.

\section{Applications}\label{sec2}

In this section, we apply Theorem~\ref{6-t1} to prove discretized
normal approximation results for integer valued random variables with
different dependence structures including local dependence,
exchangeable pairs, and size biasing.

\subsection{Local dependence}
A typical setting of local dependence is as follows. Let $S=\sum_{i=1}^n X_i$ be a sum of integer valued random variables with $\E
X_i=\mu_i$, $\mu=\sum_{i=1}^n \mu_i$ and $\Var(S)=\sigma^2$.
Suppose for each $i\in\{1,\ldots,n\}$, there exist neighborhoods
$A_i, B_i \subset\{1,\ldots,n\}$ such that $X_i$ is independent of $\{
X_j: j\notin A_i\}$, and $\{X_j: j\in A_i\}$ is independent of $\{X_j:
j\notin B_i\}$. It can be verified as in Section~3.2 of \cite{ChRo10} that\vspace*{-2.2pt}
\begin{eqnarray*}
\bigl(S,S', G\bigr)=\biggl(S,S-\sum_{j\in A_I}(X_j-
\mu_j), -n(X_I-\mu_I)\biggr)
\end{eqnarray*}
is a Stein coupling where $I$ is a uniform random index from $\{
1,\ldots,n\}$ and independent of $\{X_1,\ldots, X_n\}$. Theorem~\ref
{6-t1} has the following corollary for local dependence.
%
\begin{cor}\label{6-t2}
Under the above setting, assume that for every $i\in\{1,\ldots,n\}$,
$|N(B_i)|\le\theta$ where $N(B_i)=\{j\in\{1,\ldots, n\}: A_j\cap
B_i\ne\emptyset\}$ and $|\cdot|$ denotes cardinality. Let\vspace*{-2.2pt}
\begin{eqnarray*}
\xi_i=\frac{X_i-\mu_i}{\sigma},\qquad  \eta_i=\sum
_{j\in A_i}\xi_j.
\end{eqnarray*}
Then,\vspace*{-2.2pt}
%
\begin{eqnarray}
\label{6-t2-1} &&d_{\mathrm{TV}}\bigl(\mathcal{L}(S), N^d\bigl(\mu,
\sigma^2\bigr)\bigr)\nonumber
\\[-2.2pt]
&&\quad \le2\sqrt{\theta\sum_{i=1}^n \E
\xi_i^2 \eta_i^2}+\sqrt{
\frac
{\pi}{8}} \sum_{i=1}^n \E\bigl|
\xi_i \eta_i^2\bigr| + \sqrt{n \sum
_{i=1}^n \E\xi_i^2
\eta_i^4}
\\[-2.2pt]
&&\qquad {} +\frac{1}{2}\sum_{i=1}^n \E
\bigl[ \bigl( \sigma\bigl|\xi_i \eta_i^2\bigr| + |
\xi_i \eta_i|\bigr) d_{\mathrm{TV}} \bigl(
\mathcal{L}(S|\mathcal{F}_i), \mathcal{L}( S+1|\mathcal{F}_i)
\bigr) \bigr],\nonumber
\end{eqnarray}
where $\mathcal{F}_i$ is a $\sigma$-field such that $\sigma(X_j:
j\in A_i)\subset\mathcal{F}_i$.
\end{cor}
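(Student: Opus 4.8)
\noindent\emph{Proof proposal.} The corollary is a direct specialization of Theorem~\ref{6-t1} to the displayed Stein coupling, so the plan is just to evaluate the four terms of \eqref{6-t1-1} for
\[
(S,S',G)=\Bigl(S,\ S-\sum_{j\in A_I}(X_j-\mu_j),\ -n(X_I-\mu_I)\Bigr),
\]
where $I$ is uniform on $\{1,\dots,n\}$ and independent of $(X_1,\dots,X_n)$. First I would record $D=S'-S=-\sum_{j\in A_I}(X_j-\mu_j)=-\sigma\eta_I$ and $G=-n(X_I-\mu_I)=-n\sigma\xi_I$, so that $GD=n\sigma^2\xi_I\eta_I$, $GD^2=-n\sigma^3\xi_I\eta_I^2$ and $G^2D^4=n^2\sigma^6\xi_I^2\eta_I^4$. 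Since $I$ is uniform and independent of the $X_i$, conditioning on $(X_1,\dots,X_n)$ replaces any function of $I$ by the average $\frac1n\sum_{i=1}^n(\cdot)$; in particular $\E|GD^2|=\sigma^3\sum_{i=1}^n\E|\xi_i\eta_i^2|$ and $\E(G^2D^4)=n\sigma^6\sum_{i=1}^n\E\xi_i^2\eta_i^4$, which after dividing by $\sigma^3$ are precisely the second and third terms of \eqref{6-t2-1}.

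For the first term I need $\Var(\E(GD\mid S))$. Since $I\perp(X_1,\dots,X_n)$ and $\sigma(S)\subset\sigma(X_1,\dots,X_n)$, the tower property gives $\E(GD\mid S)=\sigma^2\,\E\bigl(\sum_{i=1}^n\xi_i\eta_i\bigm|S\bigr)$, and the law of total variance then yields $\Var(\E(GD\mid S))\le\sigma^4\Var\bigl(\sum_{i=1}^n\xi_i\eta_i\bigr)$. It remains to run the standard local-dependence variance estimate: $\xi_i\eta_i$ is a function of $\{X_k:k\in A_i\}$, which is independent of $\{X_k:k\notin B_i\}$, so $\cov(\xi_i\eta_i,\xi_j\eta_j)=0$ whenever $A_j\cap B_i=\emptyset$; applying the same reasoning with $i$ and $j$ interchanged shows that a nonvanishing covariance forces both $j\in N(B_i)$ and $i\in N(B_j)$. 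Bounding each surviving covariance by $\frac12(\E\xi_i^2\eta_i^2+\E\xi_j^2\eta_j^2)$ and using $|N(B_i)|\le\theta$ on each of the two resulting sums gives $\Var(\sum_{i=1}^n\xi_i\eta_i)\le\theta\sum_{i=1}^n\E\xi_i^2\eta_i^2$, so the first term of \eqref{6-t1-1} is at most $2\sqrt{\theta\sum_{i=1}^n\E\xi_i^2\eta_i^2}$.

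For the last term the job is to choose $\mathcal{F}$. I would take $\mathcal{F}=\sigma(I)\vee\mathcal{F}_I$, i.e.\ the $\sigma$-field generated by $I$ together with, on each event $\{I=i\}$, the field $\mathcal{F}_i\supset\mathcal{B}(X_j:j\in A_i)$; this contains $G=-n\sigma\xi_I$ and $D=-\sigma\eta_I$, hence $\mathcal{B}(G,D)\subset\mathcal{F}$, as Theorem~\ref{6-t1} requires. Conditioning on $\{I=i\}$ and using $(X_1,\dots,X_n)\perp I$, the conditional laws $\mathcal{L}(S\mid\mathcal{F})$ and $\mathcal{L}(S+1\mid\mathcal{F})$ reduce on that event to $\mathcal{L}(S\mid\mathcal{F}_i)$ and $\mathcal{L}(S+1\mid\mathcal{F}_i)$, while there $|GD^2|+|GD|=n\sigma^2\bigl(\sigma|\xi_i\eta_i^2|+|\xi_i\eta_i|\bigr)$; multiplying by $\frac1{2\sigma^2}$ and averaging $\frac1n\sum_{i=1}^n$ over $I$ gives the fourth term of \eqref{6-t2-1}.

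Everything is bookkeeping once the coupling is plugged in; the two points needing care are (i) the symmetry of the support of $\cov(\xi_i\eta_i,\xi_j\eta_j)$ in $(i,j)$ --- without noting that a nonzero covariance forces $i\in N(B_j)$ too, one controls $|\{j:\cov\ne0\}|$ but not $|\{i:\cov\ne0\}|$, and the constant $\theta$ is lost --- and (ii) picking $\mathcal{F}$ large enough to contain $\mathcal{B}(G,D)$ yet small enough that, on $\{I=i\}$, it collapses to the neighborhood field $\mathcal{F}_i$, which is what makes the last term local. The main obstacle, such as it is, lies in (ii): arranging the conditioning so that the abstract bound of Theorem~\ref{6-t1} becomes the concrete, index-by-index form \eqref{6-t2-1}; in the applications the residual factors $d_{TV}(\mathcal{L}(S\mid\mathcal{F}_i),\mathcal{L}(S+1\mid\mathcal{F}_i))$ are then controlled via Lemma~\ref{6-l1}.
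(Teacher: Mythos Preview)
Your proposal is correct and follows the paper's own proof essentially step for step: the same Stein coupling, the same moment identities $\E|GD^2|=\sigma^3\sum_i\E|\xi_i\eta_i^2|$ and $\E G^2D^4=n\sigma^6\sum_i\E\xi_i^2\eta_i^4$, the same variance bound $\Var(\E(GD\mid S))\le\sigma^4\theta\sum_i\E\xi_i^2\eta_i^2$ via the covariance inequality $\cov(X,Y)\le\tfrac12(\E X^2+\E Y^2)$ and the local-dependence structure, and the same choice $\mathcal{F}=\mathcal{B}(I,\mathcal{F}_I)$. Your derivation of the last term in fact yields $\sigma|\xi_i\eta_i^2|$ rather than $\sigma\,\E|\xi_i\eta_i^2|$ inside the outer expectation, which is what Theorem~\ref{6-t1} actually gives and what the paper's proof produces as well; the extra $\E$ in the displayed statement appears to be a typo.
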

\begin{pf}
Let $I$ be a uniform random index from $\{1,\ldots,n\}$ and
independent of $\{X_1,\ldots, X_n\}$. Let $G=-n(X_I-\mu_I)$, $D=-\sum_{j\in A_I} (X_j-\mu_j)$, and let $X_{A_i}=\{X_j: j\in A_i\}$. We
bound the right-hand side of (\ref{6-t1-1}) as follows. From the
definition of neighborhoods $A_i, B_i$, the inequality $\cov(X,Y) \le
(\E X^2+\E Y^2)/2$ and the bound $|N(B_i)|\leq\theta$, we have
\begin{eqnarray*}
&&\Var\bigl(\E(GD|S)\bigr)\le\Var\bigl(\E\bigl(GD|\{X_1,\dots,
X_n\}\bigr)\bigr) \\
&&\quad = \Var\Biggl( \sum_{i=1}^n
(X_i-\mu_i) \sum_{j\in A_i}
(X_j-\mu_j)\Biggr)
\\
&&\quad \le\sum_{i,i': X_{A_i}, X_{A_{i'}} \mathrm{not\ independent}} \cov \biggl( (X_i-
\mu_i)\sum_{j\in A_i}(X_j-
\mu_j), (X_{i'}-\mu _{i'})\sum
_{j'\in A_{i'}}(X_{j'}-\mu_{j'}) \biggr)
\\
&&\quad \le\sum_{i,i': X_{A_i}, X_{A_{i'}} \mathrm{not\ independent}} \biggl\{ \frac{\E[(X_i-\mu_i)\sum_{j\in A_i}(X_j-\mu_j)]^2}{2}
\\
&&\quad \hphantom{\le\sum_{i,i': X_{A_i}, X_{A_{i'}} \mathrm{not\ independent}} \biggl\{} +\frac{\E[(X_{i'}-\mu_{i'})\sum_{j'\in
A_{i'}}(X_{j'}-\mu_{j'})]^2 }{2} \biggr\}
\\
&&\quad \le\theta\sum_{i=1}^n \E
\biggl[(X_i-\mu_i)\sum_{j\in A_i}(X_j-
\mu _j)\biggr]^2
\\
&&\quad = \sigma^4 \theta\sum_{i=1}^n
\E\xi_i^2 \eta_i^2.
\end{eqnarray*}
Moreover,
\begin{eqnarray*}
\E|GD|=\sigma^2 \sum_{i=1}^n
\E|\xi_i \eta_i|,\qquad  \E\bigl|GD^2\bigr|= \sigma
^3 \sum_{i=1}^n \E\bigl|
\xi_i \eta_i^2\bigr|, \qquad \E G^2D^4
=n \sigma^6 \sum_{i=1}^n \E
\xi_i^2 \eta_i^4.
\end{eqnarray*}

\ignore{
Next, we apply Lemma~\ref{6-l1} to local dependence. Let
$S'=S-X_I+X_I'$ where $I$ is a uniform random index from $\{1,2,\ldots
,n\}$, independent of $\{X_1,X_2,\ldots,X_n\}$ and $X_I'$ is an
independent copy of $X_I$ given $\{X_j: j\ne I\}$. Then $(S,S')$ is an
exchangeable pair. Since $\E(I(S-S'=1)|S)=\frac{1}{n} \sum_{i=1}^n
\E(I(X_i-X_i'=1)|S)$, with $\mathbf{X}=\{X_1,\ldots,X_n\}$,
%
\begin{eqnarray}
\label{6-t2-5} &&\Var\bigl(\E\bigl(I\bigl(S-S'=1\bigr)|S\bigr)
\bigr)
\nonumber
\\
&\le& \frac{1}{n^2} \Var\Biggl(\sum_{i=1}^n
\E\bigl(I\bigl(X_i-X_i'=1\bigr)|\mathbf
{X}\bigr) \Biggr)
\nonumber
\\
&=&\frac{1}{n^2} \Var\Biggl(\sum_{i=1}^n
\E\bigl(I\bigl(X_i-X_i'=1\bigr)|
\{X_k: k\in A_i\}\bigr)\Biggr)
\nonumber
\\
&=&\frac{1}{n^2} \sum_{i=1}^n \sum
_{j: A_j\cap B_i\ne\emptyset} \cov\bigl[\E\bigl(I\bigl(X_i-X_i'=1
\bigr)|\{X_k: k\in A_i\}\bigr),
\nonumber
\\
&& \E\bigl(I\bigl(X_j-X_j'=1\bigr)|
\{X_l: l\in A_j\}\bigr)\bigr]
\nonumber
\\
&\le& \frac{1}{n^2} \sum_{i=1}^n
\sum_{j: A_j\cap B_i\ne\emptyset
} \biggl[\frac{\Var(\E(I(X_i-X_i'=1)|\{X_k: k\in A_i\}))}{2}
\nonumber
\\
&& +\frac{\Var(\E(I(X_j-X_j'=1)|\{X_l: l\in A_j\}) )}{2}\biggr]
\nonumber
\\
&\le& \frac{D_1 D_2}{n^2} \sum_{i=1}^n
\Var\bigl(\E\bigl(I\bigl(X_i-X_i'=1\bigr)|
\{ X_k: k\in A_i\}\bigr) \bigr)
\nonumber
\\
&\le& \frac{D_1 D_2}{n^2} \sum_{i=1}^n
\Var\bigl(I\bigl(X_i-X_i'=1\bigr)\bigr)
\nonumber
\\
&\le& \frac{D_1 D_2}{n^2} \sum_{i=1}^n \P
\bigl(X_i-X_i'=1\bigr).
\end{eqnarray}
Similarly, $\Var(\E(I(S-S'=-1)|S))\le\frac{D_1 D_2}{n^2} \sum_{i=1}^n \P(X_i-X_i'=1)$. Note that
%
\begin{eqnarray}
\label{6-t2-6} \P\bigl(S-S'=1\bigr)=\frac{1}{n} \sum
_{i=1}^n \E\bigl(\E\bigl(I
\bigl(X_i-X_i'=1\bigr)|X\bigr)\bigr)=
\frac
{1}{n} \sum_{i=1}^n \P
\bigl(X_i-X_i'=1\bigr),
\end{eqnarray}
we have, by Lemma~\ref{6-l1},
%
\begin{eqnarray}
\label{6-t2-7} d_{\mathrm{TV}} \bigl(\mathcal{L}(S), \mathcal{L}(S+1)\bigr) \le
\frac{2\sqrt{D_1
D_2}}{\sqrt{\sum_{i=1}^n \P(X_i-X_i'=1)}}.
\end{eqnarray}
With inequality (\ref{6-t2-7}), we are ready to bound $\sup_\theta
d_{\mathrm{TV}} (\mathcal{L}(S|\Theta=\theta), \mathcal{L}(S+1|\Theta
=\theta))$. Let $\Theta=\{I, X_j: j\in A_I\}$, then $\sigma(G,
D)\subset\sigma(\Theta)$. Given any value of $\Theta$, say, $I=i$,
$X_j=x_j$ for $j\in A_i$, $S$ is a constant ($\sum_{j\in A_i} x_j$)
plus a sum of $n-|A_i|$ locally dependent random integers with
neighborhood size bounded by $D_1+D_2$ and $(D_1+D_2)D_1$. Therefore,
using (\ref{6-t2-7}),
%
\begin{eqnarray}
\label{6-t2-8} \sup_\theta d_{\mathrm{TV}} \bigl(
\mathcal{L}(S|\Theta=\theta), \mathcal {L}(S+1|\Theta=\theta)\bigr)\le
\frac{2\sqrt{(D_1+D_2)^2D_1}}{\inf_{i\in\{1,2,\ldots,n\}} \sqrt{\sum_{j\notin B_i} \P(X_j-X_j'=1)}}.
\end{eqnarray}
}

The corollary is proved by applying the above bounds in (\ref{6-t1-1})
with $\mathcal{F}=\sigma(I,\mathcal{F}_I)$.
\end{pf}

We remark that in the case that $S$ is a sum of independent integer
valued random variables, a modification of the arguments from
intermediate terms in the proof of Theorem~\ref{6-t1} yields a result
similar to Theorem~7.4 of \cite{ChGoSh11}.

\subsubsection{$2$-runs}
We provide a concrete example of local dependence here. Let $\zeta
_1,\ldots, \zeta_{n}$ be independent and identically distributed
Bernoulli variables with $\P(\zeta_1=1)=1-\P(\zeta_1=0)=p$ where
$p\in(0,1)$. Suppose $n\ge7$. Let $X_i=\zeta_i \zeta_{i+1}$ and
$S=\sum_{i=1}^n X_i$. Here and in the rest of this example, indices
outside $\{1,\dots, n\}$ are understood as one plus their residues mod
$n$. We can apply Corollary~\ref{6-t2} with $A_i=\{i-1, i, i+1\}$,
$B_i=\{i-2,\dots, i+2\}$, so that $\theta=7$. The mean and variance
of $S$ can be calculated as
%
\begin{equation}
\label{2runs10} \mu=\E S=np^2, \qquad \sigma^2=\Var(S)=n
\bigl(p^2+2p^3-3p^4\bigr).
\end{equation}
Applying (\ref{6-t2-1}) with $\mathcal{F}_i=\sigma(\zeta
_{i-1},\zeta_i,\zeta_{i+1},\zeta_{i+2})$, along with the upper
bounds $|\xi_i|\le1/\sigma, |\eta_i|\le3/\sigma$, we have
\begin{eqnarray*}
d_{\mathrm{TV}}\bigl(\mathcal{L}(S), N^d\bigl(\mu,
\sigma^2\bigr)\bigr) \le c_p'
\frac{1}{\sqrt {n}}+c_p''\sup
_{a,b\in\{0,1\}} d_{\mathrm{TV}}\bigl(\mathcal{L}(V_{a,b}),
\mathcal {L}(V_{a,b}+1)\bigr),
\end{eqnarray*}
where $c_p', c_p''$ are constants depending on $p$ and with $m=n-4$ and
$a,b \in\{0,1\}$ given,
\begin{eqnarray*}
V_{a,b}=a \zeta_1+\sum_{j=2}^{m}
\zeta_{j-1} \zeta_{j} +b\zeta_m.
\end{eqnarray*}
Regarding $V_{a,b}=f(\zeta_1,\ldots, \zeta_m)$, we define
$V_{a,b}'=f(\zeta_1,\ldots, \zeta_I',\ldots, \zeta_m)$ where $I$
is uniformly chosen from $\{1,\ldots,m\}$, independent of $\{\zeta
_1,\ldots, \zeta_{m}\}$ and given $I$, $\zeta_I'$ is an independent
copy of $\zeta_I$. From Remark~\ref{exchconst}, $(V_{a,b},V_{a,b}')$
is an exchangeable pair.
Since given $\{\zeta_1,\dots,\zeta_m\}$ and \mbox{$I=i$},
\begin{eqnarray*}
\bigl\{V_{a,b}-V_{a,b}'=1\bigr\}=
\cases{ \bigl\{a+\zeta_2=1,\zeta_1=1,\zeta_1'=0
\bigr\}, &\quad  $i=1$,
\vspace*{1pt}\cr
\bigl\{b+\zeta_{m-1}=1, \zeta_m=1,
\zeta_m'=0\bigr\}, &\quad  $i=m$,
\vspace*{1pt}\cr
\bigl\{
\zeta_{i-1}+\zeta_{i+1}=1, \zeta_i=1,
\zeta_i'=0\bigr\}, &\quad  $2\leq i\leq m-1$, } %
\end{eqnarray*}
we have
%
\begin{eqnarray}
\label{131} && \E\bigl(I\bigl(V_{a,b}-V_{a,b}'=1
\bigr)|\{\zeta_1,\dots, \zeta_m\}\bigr)\nonumber
\\
&&\quad =\frac{1-p}{m} \Biggl[I(a+\zeta_2=1, \zeta_1=1)+I(b+
\zeta_{m-1}=1, \zeta_m=1)\\
&&\hphantom{\quad =\frac{1-p}{m} \Biggl[}{}+\sum_{i=2}^{m-1}
I(\zeta_{i-1}+\zeta_{i+1}=1, \zeta _i=1) \Biggr].\nonumber
\end{eqnarray}
Taking expectation on both sides of \eqref{131} and lower bounding the
right-hand side by the last term lead to
\begin{eqnarray*}
\P\bigl(V_{a,b}-V_{a,b}'=1\bigr)\ge
\frac{2(n-6)}{n-4} p^2(1-p)^2.
\end{eqnarray*}
In calculating the variance of the right-hand side of \eqref{131}, we use
the fact that each indicator is only correlated with at most two other
indicators. Therefore,
\begin{eqnarray*}
 \sqrt{\Var\bigl(\E\bigl(I\bigl(V_{a,b}-V_{a,b}'=1
\bigr)|V_{a,b}\bigr)\bigr)}&\le&\sqrt{\Var\bigl( \E \bigl(I
\bigl(V_{a,b}-V_{a,b}'=1\bigr)|\{
\zeta_1,\dots, \zeta_m\}\bigr) \bigr) }
\\
& \le&\frac{1-p}{n-4} \sqrt{3(n-4)}.
\end{eqnarray*}
Similarly,\vspace*{-1pt}
\begin{eqnarray*}
\sqrt{\Var\bigl(\E\bigl(I\bigl(V_{a,b}-V_{a,b}'=-1
\bigr)|V_{a,b}\bigr)\bigr)}\le\frac{p}{n-4} \sqrt{3(n-4)}.
\end{eqnarray*}
Applying Lemma~\ref{6-l1}, we have\vspace*{-1pt}
\begin{eqnarray*}
d_{\mathrm{TV}}\bigl(\mathcal{L}(V_{a,b}),\mathcal{L}(V_{a,b}+1)
\bigr)\le\frac{\sqrt {3(n-4)}}{2(n-6)p^2(1-p)^2}.
\end{eqnarray*}
Therefore, we have proved the following proposition.
%
\begin{prop}
For $n\ge2$, let $\zeta_1,\ldots, \zeta_{n}$ be independent and
identically distributed Bernoulli variables with $\P(\zeta_1=1)=1-\P
(\zeta_1=0)=p$ where $p\in(0,1)$. Let $X_i=\zeta_i \zeta_{i+1}$ and
$S=\sum_{i=1}^n X_i$. We have\vspace*{-1pt}
%
\begin{equation}
\label{2runs0} d_{\mathrm{TV}}\bigl(\mathcal{L}(S), N^d\bigl(\mu,
\sigma^2\bigr)\bigr) \le c_p/\sqrt{n},
\end{equation}
where $\mu$ and $\sigma^2$ are defined as in \eqref{2runs10} and $c_p$
is a constant depending on $p$.
\end{prop}

We remark that the above argument also applies to $k$-runs for $k>2$
with straightforward modifications, for example, enlarging the
neighborhoods $A_i$ and $B_i$, changing the definition of $\mathcal
{F}_i$, etc.

Total variation approximation for $2$-runs was studied by Barbour and
Xia \cite{BaXi99} and R\"ollin \cite{Ro05} using the translated
Poission approximation. Barbour and Xia \cite{BaXi99} assumed some
extra conditions on $p$ to obtain a bound on the total variation
distance between $\mathcal{L}(S)$ and a translated Poisson
distribution. Although the result in \cite{Ro05} is of the same order
as the bound in \eqref{2runs0} in terms of $n$ and applies for all $p$,
the approach used was different from ours.

\subsection{Exchangeable pairs}
A systematic introduction on the exchangeable pair approach can be
found in Stein \cite{St86}. The basic setting is as follows. Let
$(S,S')$ be an exchangeable pair (i.e., $\mathcal{L}(S,S')=\mathcal
{L}(S',S)$) of integer valued random variables with $\E S= \mu$, $\Var
(S)= \sigma^2$. Suppose we have the approximate linearity condition,\vspace*{-1pt}
%
\begin{equation}
\label{6.3.2-1} \E\bigl(S-S'|S\bigr)=\lambda(S-\mu)+\sigma\E(R|S),
\end{equation}
for a positive number $\lambda$ and a random variable $R$.
A simple modification of Theorem~\ref{6-t1} yields the following
corollary for exchangeable pairs.
%
\begin{cor}\label{6-c1}
Let $(S,S')$ be an exchangeable pair of integer valued random variables
satisfying \eqref{6.3.2-1}. Let $\mu=\E S, \sigma^2=\Var(S)$.
We have
%
\begin{eqnarray}
\label{6-c1-1} &&d_{\mathrm{TV}}\bigl(\mathcal{L}(S), N^d \bigl(\mu,
\sigma^2\bigr)\bigr)\nonumber
\\
&&\quad \le\biggl(\sqrt{\frac{\pi}{2}}+2\biggr)\frac{\sqrt{\E R^2}}{\lambda}+
\frac
{\sqrt{\Var(\E((S'-S)^2|S))}}{\lambda\sigma^2}\\
&&\qquad {}+\sqrt{\frac{\pi
}{8}} \frac{\E|S'-S|^3}{2\lambda\sigma^3} +\frac{ \sqrt{\E
|S'-S|^6} }{2\lambda\sigma^3}\nonumber
\\
&&\qquad {} +\frac{1}{4\lambda\sigma^2}\E \bigl[ \bigl(\bigl|S'-S\bigr|^3+
\bigl(S'-S\bigr)^2\bigr) d_{\mathrm{TV}}\bigl(
\mathcal{L}(S|\mathcal{F}), \mathcal{L}(S+1|\mathcal{F})\bigr) \bigr],\nonumber
\end{eqnarray}
where $\mathcal{F}$ is a $\sigma$-field such that $\sigma
(S'-S)\subset\mathcal{F}$.
\end{cor}
\begin{pf}
We follow the proof of Theorem~\ref{6-t1} with minor modification.
Let $G=\frac{1}{2\lambda} (S'-S)$ and $D=S'-S$.
From the exchangeability of $(S,S')$,
\begin{eqnarray*}
\E G \bigl(f\bigl(S'\bigr)+f(S)\bigr)=0.
\end{eqnarray*}
By (\ref{6.3.2-1}) and the above equality,
\begin{eqnarray*}
\E(S-\mu) f(S)=\E\bigl\{G f\bigl(S'\bigr)-G f(S)\bigr\}-
\frac{\sigma}{\lambda} \E f(S) R.
\end{eqnarray*}
Therefore, (\ref{6-t1-7}) has an extra term $\sigma\E f_h(S)
R/\lambda$, which is bounded by $\sqrt{\pi/2}\E|R|/\lambda$ from
\eqref{6-t1-5}.
Moreover, from the exchangeability of $(S,S')$ and \eqref{6.3.2-1},
\begin{eqnarray*}
\E G D &=& \frac{1}{2\lambda} \E\bigl(S'-S\bigr)^2
\\
&=&\frac{1}{2\lambda} \bigl[ \E\bigl(S'-S\bigr) S' -
\E\bigl(S'-S\bigr)S \bigr]
\\
&=&\frac{1}{\lambda} \E\bigl(S-S'\bigr)S=\frac{1}{\lambda} \E
\bigl(S-S'\bigr) (S-\mu)
\\
&=&\sigma^2+\sigma\E\bigl((S-\mu)R\bigr)/\lambda.
\end{eqnarray*}
Hence instead of \eqref{6-t1-10},
\begin{eqnarray*}
|R_1|\le\frac{2}{\sigma^2}\biggl(\sqrt{\Var\bigl(\E(GD|S)\bigr)}+
\frac{\sigma
}{\lambda}\E\bigl|(S-\mu)R\bigr| \biggr)\le\frac{\sqrt{\Var(\E
((S'-S)^2|S))}}{\lambda\sigma^2}+\frac{2}{\lambda}
\sqrt{\E R^2}.
\end{eqnarray*}
Corollary~\ref{6-c1} follows from Theorem~\ref{6-t1} and the above arguments.
\end{pf}
A special case worth mentioning is when the exchangeable pair $(S,S')$
satisfies $|S-S'|\le1$. Examples of such exchangeable pairs include
binary expansion of a random integer [Diaconis \cite{Di77}] and
anti-voter model [Rinott and Rotar \cite{RiRo97}]. The following
result shows that under this special assumption, bounding the total
variation distance requires no more effort than bounding the Kolmogorov
distance.
%
\begin{cor}\label{6-c2}
Let $(S,S')$ be an exchangeable pair of integer valued random variables
satisfying the approximate linearity condition (\ref{6.3.2-1}). In
addition, suppose $|S-S'|\le1$. Then we have
%
\begin{eqnarray}
\label{6-c2-1} &&d_{\mathrm{TV}}\bigl(\mathcal{L}(S), N^d \bigl(\mu,
\sigma^2\bigr)\bigr)
\nonumber\\[-8pt]\\[-8pt]
&&\quad \le\biggl(\sqrt{\frac{\pi}{2}}+2\biggr)\frac{\sqrt{\E R^2}}{\lambda}+
\frac
{\sqrt{\Var(\E((S'-S)^2|S))}}{\lambda\sigma^2}+ \frac{\sqrt{\pi
/8}+1}{2\lambda\sigma^3},\nonumber
\end{eqnarray}
where $\mu$ and $\sigma^2$ are the mean and variance of $S$.
\end{cor}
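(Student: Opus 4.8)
The plan is to re-run the proof of Corollary \ref{6-c1} — equivalently, the proof of Theorem \ref{6-t1} specialized to the Stein coupling $G=(S'-S)/(2\lambda)$, $D=S'-S$ — now using the extra hypothesis $|D|\le 1$ to remove the total variation term. The bound \eqref{6-c1-1} is a sum of five terms; I would keep the first two verbatim, handle the third and fourth together, and show that the fifth (total variation) term is not incurred when $|D|\le 1$. Since $D$ is integer valued with $|D|\le 1$, we have $|D|^k=|D|=I(S\ne S')$ for every $k\ge 1$; in particular $\E|S'-S|^3=\P(S\ne S')\le 1$ and $\sqrt{\E|S'-S|^6}=\sqrt{\P(S\ne S')}\le 1$, which collapses the third and fourth terms of \eqref{6-c1-1} into $\frac{\sqrt{\pi/8}+1}{2\lambda\sigma^3}$, the last term of \eqref{6-c2-1}.

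The crux is the vanishing of the fifth term when $|D|\le 1$. In the proof of Theorem \ref{6-t1} that term comes from the jump part of the derivative of the Stein solution $f$: one writes $\E(S-\mu)f(S)=\E\{Gf(S')-Gf(S)\}-\tfrac{\sigma}{\lambda}\E f(S)R$ and $f(S+D)-f(S)=\int_0^D f'(S+t)\,dt$, and $f'$ is discontinuous at the half-integers — the test function appearing in the Stein equation for $N^d(\mu,\sigma^2)$ being the step function equal to $h(z)$ on $[z-\tfrac12,z+\tfrac12)$ — with a jump of $\tfrac1{\sigma^2}(h(z+1)-h(z))$ at $z+\tfrac12$. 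As $S$ is an integer and $|D|\le 1$, the expansion crosses exactly one half-integer on each branch: $S+\tfrac12$ on $\{D=1\}$ and $S-\tfrac12$ on $\{D=-1\}$; with $G=D/(2\lambda)$ these jumps contribute to $\E\{Gf(S')-Gf(S)\}$ exactly
\be{
\frac{1}{4\lambda\sigma^2}\Bigl(\E[I(D=1)(h(S+1)-h(S))]-\E[I(D=-1)(h(S)-h(S-1))]\Bigr).
}
Now invoke exchangeability: on $\{D=1\}$ one has $S+1=S'$, so $\E[I(D=1)h(S+1)]=\E[I(S-S'=1)h(S)]=\E[I(D=-1)h(S)]$, whence $\E[I(D=1)(h(S+1)-h(S))]=\E[(I(D=-1)-I(D=1))h(S)]=\E[h(S)\,\E(S-S'\mid S)]$; and on $\{D=-1\}$, $S-1=S'$ gives $\E[I(D=-1)h(S-1)]=\E[I(D=1)h(S)]$, whence $\E[I(D=-1)(h(S)-h(S-1))]=\E[h(S)\,\E(S-S'\mid S)]$ as well. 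The two inner expectations are therefore equal and the displayed quantity is $0$, so no $d_{TV}(\mathcal L(S\mid\mathcal F),\mathcal L(S+1\mid\mathcal F))$ term survives; taking the supremum over indicator functions $h$ then gives \eqref{6-c2-1}.

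The step I expect to demand the most care is the sign bookkeeping in the display — checking that the $\{D=-1\}$ jump enters with a minus sign, so that the two contributions cancel rather than reinforce, which reduces to writing $\int_0^{-1}$ as $-\int_{-1}^0$ and tracking the one-sided limits of the step function at $S-\tfrac12$ — together with re-verifying, exactly as in the proof of Corollary \ref{6-c1}, that the ``linear'' piece $\E[(\sigma^2-\tfrac{(S'-S)^2}{2\lambda})f'(S)]$ still contributes at most $\frac{\sqrt{\Var(\E((S'-S)^2\mid S))}}{\lambda\sigma^2}+\frac{2}{\lambda}\sqrt{\E R^2}$ via $\E(S'-S)^2=2\lambda\sigma^2+2\sigma\E[(S-\mu)R]$ (multiply \eqref{6.3.2-1} by $S-\mu$ and take expectations) and the Stein-solution bounds $\|f\|\le\sqrt{\pi/2}/\sigma$, $\|f'\|\le 2/\sigma^2$; the remaining $\sqrt{\pi/2}\,\sqrt{\E R^2}/\lambda$ is the bound on $\tfrac{\sigma}{\lambda}\E f(S)R$, as in Corollary \ref{6-c1}. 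All of this is routine once the cancellation above is in place.
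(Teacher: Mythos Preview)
Your proposal is correct and follows essentially the same route as the paper: both arguments specialize the proof of Corollary~\ref{6-c1}/Theorem~\ref{6-t1} to $G=(S'-S)/(2\lambda)$, $D=S'-S$, and the key step in each is the observation that the $h$-term $\frac{1}{\sigma^2}\E G\int_0^D(h(S+t)-h(S))\,dt$ (your ``jump part'' of $f'$) reduces, when $|D|\le 1$ and $S$ is integer valued, to $\frac{1}{4\lambda\sigma^2}\bigl(\E[I(D=1)(h(S+1)-h(S))]+\E[I(D=-1)(h(S-1)-h(S))]\bigr)$ and then vanishes by exchangeability. Your treatment of the remaining terms (collapsing $\E|S'-S|^3$ and $\sqrt{\E|S'-S|^6}$ via $|D|^k=|D|\le 1$, and reusing the $R$-bounds from Corollary~\ref{6-c1}) is exactly what the paper does implicitly when it says the bound ``can be deduced similarly as in the proof of Corollary~\ref{6-c1} except that we do not have the last term.''
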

\begin{pf}
Let $G=\frac{1}{2\lambda}(S'-S)$, $D=S'-S$. Then for $h\in\mathcal
{H}$ defined in \eqref{6-t1-3},
\begin{eqnarray}
\label{6-c2-2} && \E G\int_0^D\bigl(h(S+t)-h(S)
\bigr)\,\mathrm{d}t\nonumber
\\
&&\quad =\frac{1}{2\lambda} \E\bigl(S'-S\bigr)\int_0^{S'-S}
\bigl(h(S+t)-h(S)\bigr)\,\mathrm{d}t\nonumber
\\
&&\quad =\frac{1}{2\lambda} \E \biggl[ \int_0^1
\bigl(h(S+t)-h(S)\bigr)\,\mathrm{d}t I\bigl(S'-S=1\bigr)\nonumber
\\
&&\qquad\hphantom{\frac{1}{2\lambda} \E \biggl[} {} -\int_0^{-1} \bigl(h(S+t)-h(S)
\bigr)\,\mathrm{d}tI\bigl(S'-S=-1\bigr) \biggr]
\\
&&\quad =\frac{1}{4\lambda} \E \bigl[ \bigl(h(S+1)-h(S)\bigr)I\bigl(S'-S=1
\bigr)+\bigl(h(S-1)-h(S)\bigr)I\bigl(S'-S=-1\bigr) \bigr]\nonumber
\\
&&\quad =\frac{1}{4\lambda}\E \bigl[ \bigl(h\bigl(S'\bigr)-h(S)\bigr)I
\bigl(S'-S=1\bigr)-\bigl(h(S)-h\bigl(S'\bigr)\bigr)I
\bigl(S-S'=1\bigr) \bigr]\nonumber
\\
&&\quad =0.\nonumber
\end{eqnarray}
We used the exchangeability of $(S,S')$ in the last equality. From
(\ref{6-c2-2}), the upper bound in \eqref{1101} can be replaced by $0$.
Therefore, the bound on $d_{\mathrm{TV}}(\mathcal{L}(S), N^d (\mu, \sigma
^2))$ can be deduced similarly as Corollary~\ref{6-c1} except that we
do not have the last term on the right-hand side of \eqref{6-c1-1}.
\end{pf}
%
\begin{rem}
Under the condition of Corollary~\ref{6-c2}, R\"ollin \cite{Ro07}
obtained a bound on the total variation distance between $\mathcal
{L}(S)$ and a translated Poisson distribution. His result, together
with the triangle inequality and easy bounds on the total variation
distance between the translated Poisson distribution and the
discretized normal distribution, yields a similar bound as \eqref{6-c2-1}.
\end{rem}

\ignore{
Next example, the combinatorial central limit theorem, does not fall in
this special case.

\textbf{Example: a combinatorial central limit theorem.} Let $A=\{
a_{ij}: i,j\in\{1,2\ldots,n\}\}$ be an $n$ by $n$ matrix with $0$-$1$
valued components. Let $\pi$ be a uniform permutation of $\{1,2,\ldots
,n\}$, $S=\sum_{i=1}^n a_{i\pi(i)}$. We define four quantities
related to the matrix $A$ as follows. Here and in the remaining of this
section, the summations are over all indices.
%
\begin{eqnarray}
c_1=\frac{1}{n^2} \sum_{i,j=1}^n
a_{ij},
\end{eqnarray}
%
\begin{eqnarray}
c_2=\frac{1}{n^3} \sum_{i\ne j}^n
\sum_{k=1}^n a_{ik}
a_{jk},
\end{eqnarray}
%
\begin{eqnarray}
c_3=\frac{1}{n^4} \sum_{i\ne j, i'\ne j'}^n
a_{ii'} a_{jj'},
\end{eqnarray}
%
\begin{eqnarray}
c_4=\frac{1}{n^4} \sum_{i\ne j, i'\ne j'}^n
I(a_{ii'}+a_{jj'}-a_{ij'}-a_{i'j}=1).
\end{eqnarray}
Then with $\mu=\E S=n c_1$, $\sigma^2= nc_1-n^2 c_1^2+\frac
{n^3}{n-1} c_3$, we have
%
\begin{prop}\label{6-p1}
%
\begin{eqnarray}
d_{\mathrm{TV}}\bigl(\mathcal{L}(S), N^d\bigl(\mu,
\sigma^2\bigr)\bigr)\le c_A /\sqrt{n}
\end{eqnarray}
where $c_A$ depends on $c_1,c_2,c_3,c_4$ as in (\ref{6.67}).
\end{prop}
\begin{pf}
By choosing $I\ne J$ uniformly from $\{i\ne j: i,j\in\{1,2,\ldots,n\}
\}$ and independent of $\pi$, we obtain an exchangeable pair of random
integers $(S,S')$ where
%
\begin{eqnarray}
S'=S-a_{I\pi(I)}-a_{J \pi(J)} +a_{I\pi(J)}+
a_{J\pi(I)}.
\end{eqnarray}
It is known that with $\lambda=\frac{2}{n-1}$,
%
\begin{eqnarray}
\E\bigl(S-S'|S\bigr) =\lambda(S-\mu).
\end{eqnarray}
Therefore, $\E(S'-S)^2=2\lambda\sigma^2$. Noting that $\sigma(S'-S)
\subset\sigma(\{I, J, \pi(I), \pi(J)\})$, we have
%
\begin{eqnarray}
&&d_{\mathrm{TV}}\bigl(\mathcal{L}(S), N^d \bigl(\mu,
\sigma^2\bigr)\bigr)
\nonumber
\\
&\le& \frac{\sqrt{\Var(\E((S'-S)^2|S))}}{\lambda\sigma^2} +\sqrt {\frac{\pi}{8}} \frac{\E|S'-S|^3}{2\lambda\sigma^3} +
\frac
{1}{2\lambda\sigma^3}\sqrt{\E|S'-S|^6}
\nonumber
\\
&&+\biggl(\frac{\E|S'-S|^3}{4\lambda\sigma^2}+\frac{1}{2}\biggr)\sup_{\theta
}
d_{\mathrm{TV}}\bigl(\mathcal{L}(S|\Theta=\theta), \mathcal{L}(S+1|\Theta =
\theta)\bigr)
\end{eqnarray}
where $\Theta=\{I, J, \pi(I), \pi(J)\}$ by applying Theorem~\ref
{6-t1}. We first bound the absolute moments of $S'-S$.
%
\begin{eqnarray}
\E|S'-S|^3 &=& \E|a_{I\pi(J)}+ a_{J\pi(I)}
-a_{I\pi(I)}-a_{J \pi
(J)}|^3
\nonumber
\\
&=& \frac{1}{n^2(n-1)^2} \sum_{i\ne j, i'\ne j'}^n
|a_{ij'}+a_{ji'}-a_{ii'}-a_{jj'}|^3
\nonumber
\\
&\le& \frac{1}{n^2(n-1)^2} \sum_{i\ne j, i'\ne j'}^n
\bigl[(a_{ij'}+a_{ji'})^3+(a_{ii'}+a_{jj'})^3
\bigr]
\nonumber
\\
&=& \frac{2}{n^2(n-1)^2} \sum_{i\ne j, i'\ne j'}^n
(a_{ii'}+a_{jj'}+6a_{ii'}a_{jj'})
\nonumber
\\
&=&\frac{4}{n^2} \sum_{i,j=1}^n
a_{ij} +\frac{12}{n^2(n-1)^2} \sum_{i\ne j, i'\ne j'}^n
a_{ii'} a_{jj'}.
\end{eqnarray}
Similarly,
%
\begin{eqnarray}
\E\bigl(S'-S\bigr)^6 &=&\frac{1}{n^2(n-1)^2} \sum
_{i\ne j, i'\ne j'} ^n (a_{ij'}+a_{ji'}-a_{ii'}-a_{jj'})^6
\nonumber
\\
&\le& \frac{1}{n^2(n-1)^2} \sum_{i\ne j, i'\ne j'}^n
\bigl[(a_{ij'}+a_{ji'})^6+(a_{ii'}+a_{jj'})^6
\bigr]
\nonumber
\\
&=&\frac{2}{n^2(n-1)^2} \sum_{i\ne j, i'\ne j'}^n
(a_{ij'}+a_{ji'}+62 a_{ij'}a_{ji'})
\nonumber
\\
&=& \frac{4}{n^2} \sum_{i,j=1}^n
a_{ij}+\frac{124}{n^2(n-1)^2}\sum_{i\ne j, i'\ne j'}^n
a_{ii'} a_{jj'}.
\end{eqnarray}
Next, we calculate $\Var(\E((S'-S)^2|S))$.
%
\begin{eqnarray}
&&n^2(n-1)^2\Var\bigl(\E\bigl(\bigl(S'-S
\bigr)^2|S\bigr)\bigr)
\nonumber
\\
&\le& n^2(n-1)^2\Var\bigl(\E\bigl(\bigl(S'-S
\bigr)^2|\pi\bigr)\bigr)
\nonumber
\\
&=& \Var\biggl(\sum_{i\ne j} (a_{i\pi(j)}+a_{j\pi(i)}-a_{i\pi
(i)}-a_{j\pi(j)})^2
\biggr)
\nonumber
\\
&=&\sum_{i\ne j, i'\ne j', |i,j,i',j'|\le3}\cov\bigl[(a_{i\pi
(j)}+a_{j\pi(i)}-a_{i\pi(i)}-a_{j\pi(j)})^2,
\nonumber
\\
&& (a_{i'\pi
(j')}+a_{j'\pi(i')}-a_{i'\pi(i')}-a_{j'\pi(j')})^2
\bigr]
\nonumber
\\
&&+\sum_{|i,j,i',j'|=4}\cov\bigl[(a_{i\pi(j)}+a_{j\pi(i)}-a_{i\pi
(i)}-a_{j\pi(j)})^2,
\nonumber
\\
&& (a_{i'\pi(j')}+a_{j'\pi(i')}-a_{i'\pi(i')}-a_{j'\pi
(j')})^2
\bigr]
\nonumber
\\
&=& R_1+R_2.
\end{eqnarray}
For $R_1$, using the inequality $\cov(X, Y)\le(\E X^2+\E Y^2)/2$,
%
\begin{eqnarray}
R_1&\le& (5n-8) \sum_{i\ne j}^n
\E(a_{i\pi(j)}+a_{j\pi(i)}-a_{i\pi
(i)}-a_{j\pi(j)}
)^4
\nonumber
\\
&=& \frac{5n-8}{n(n-1)} \sum_{i\ne j, i'\ne j'}^n
(a_{ij'}+a_{ji'}-a_{ii'}-a_{jj'})^4
\nonumber
\\
&\le& \frac{4(5n-8)(n-1)}{n} \sum_{i,j=1}^n
a_{ij}+\frac
{28(5n-8)}{n(n-1)} \sum_{i\ne j, i'\ne j'}^n
a_{ii'} a_{jj'}.
\end{eqnarray}
For $R_2$, with $(n)_4=n(n-1)(n-2)(n-3)$,
\begin{eqnarray}
R_2 &=&\sum_{|i,j,i',j'|=4} \Biggl\{
\frac{1}{(n)_4} \sum_{|k,l,k',l'|=4} (a_{il}+a_{jk}-a_{ik}-a_{jl})^2(a_{i'l'}+a_{j'k'}-a_{i'k'}-a_{j'l'})^2
\nonumber
\\
&& -\frac{1}{n^2(n-1)^2} \sum_{k\ne l, k'\ne l'}^n
(a_{il}+a_{jk}-a_{ik}-a_{jl})^2(a_{i'l'}+a_{j'k'}-a_{i'k'}-a_{j'l'})^2
\Biggr\}
\nonumber
\\
&=& \sum_{|i,j,i',j'|=4} \sum_{|k,l,k',l'|=4}
\frac
{4n-6}{n(n-1)(n)_4} (a_{il}+a_{jk}-a_{ik}-a_{jl})^2
\nonumber
\\
&& \times(a_{i'l'}+a_{j'k'}-a_{i'k'}-a_{j'l'})^2
\nonumber
\\
&&- \sum_{|i,j,i',j'|=4} \sum_{k\ne l, k'\ne l', |k,l,k',l'|\le
3}
\frac{1}{n^2(n-1)^2} (a_{il}+a_{jk}-a_{ik}-a_{jl})^2
\nonumber
\\
&& \times(a_{i'l'}+a_{j'k'}-a_{i'k'}-a_{j'l'})^2.
\nonumber
\end{eqnarray}
It is easy to check that
\begin{eqnarray}
|R_2|\le\frac{64(4n-6)(n-2)(n-3)}{n^2(n-1)^2} \sum_{i\ne j, i'\ne
j'}^n
a_{ii'} a_{jj'}+\frac{64(n-2)(n-3)}{n^2} \sum
_{i\ne j}^n \sum_{k=1}^n
a_{ik} a_{jk}.
\nonumber
\end{eqnarray}
Therefore,
%
\begin{eqnarray}
&&\Var\bigl(\E\bigl(\bigl(S'-S\bigr)^2|S\bigr)\bigr)
\nonumber
\\
&\le& \frac{1}{n^2(n-1)^2} \Biggl\{ \frac{4(5n-8)(n-1)}{n}\sum
_{i,j=1}^n a_{ij}+\frac{64(n-2)(n-3)}{n^2} \sum
_{i\ne j}^n \sum_{k=1}^n
a_{ik} a_{jk}
\nonumber
\\
&&+\biggl[\frac{64(4n-6)(n-2)(n-3)}{n^2(n-1)^2}+\frac
{28(5n-8)}{n(n-1)}\biggr]\sum
_{i\ne j, i'\ne j'}^n a_{ii'} a_{jj'} \Biggr\}.
\end{eqnarray}
The last term we only need to bound is $\sup_{\theta} d_{\mathrm{TV}}(\mathcal
{L}(S|\Theta=\theta), \mathcal{L}(S+1|\Theta=\theta))$ with
$\Theta=\{I, J, \pi(I), \pi(J)\}$. Given any realization of $\Theta
$, consider the $n-2$ by $n-2$ matrix by deleting rows $I, J$ and
columns $\pi(I), \pi(J)$ of $A$. Denote $m=n-2$. Without loss of
generality, assume $I=n-1, J=n, \pi(I)=n-1, \pi(J)=n$. Let $\pi
^\dagger$ be an independent uniform permutation of $\{1,\ldots,m\}$. Then
%
\begin{eqnarray}
d_{\mathrm{TV}}(\mathcal{L}(S|\Theta), \mathcal{L}(S+1|\Theta
)=d_{\mathrm{TV}}\bigl(\mathcal{L}(V),\mathcal{L}(V+1)\bigr)
\end{eqnarray}
where $V=\sum_{i=1}^m a_{i\pi^\dagger(i)}$. To apply Lemma~\ref
{6-l1}, define
%
\begin{eqnarray}
V'=V-a_{I^\dagger\pi^\dagger(I^\dagger)}-a_{J^\dagger\pi^\dagger
(J^\dagger)} +a_{I^\dagger\pi(J^\dagger)}
+a_{J^\dagger\pi
^\dagger(I^\dagger)}
\end{eqnarray}
where $I^\dagger\ne J^\dagger$ are uniformly chosen from $\{i\ne j:
i,j\in\{1,\ldots,m\}\}$ and independent of everything else. Then
$(V,V')$ is an exchangeable pair of random integers. $\Var(\E
(I(V-V'=1)|V))$ can be bounded as follows.
%
\begin{eqnarray}
&&m^2 (m-1)^2\Var\bigl(\E\bigl(I\bigl(V-V'=1
\bigr)|V\bigr)\bigr)
\nonumber
\\
&\le& \Var\Biggl(\sum_{i\ne j}^m
I(a_{i\pi^\dagger(i)}+a_{j\pi^\dagger
(j)}-a_{i\pi^\dagger(j)}-a_{j\pi^\dagger(i)}=1)
\Biggr)
\nonumber
\\
&=& \sum_{i\ne j, i'\ne j', |i,j,i',j'|\le3} \cov\bigl[I(a_{i\pi^\dagger
(i)}+a_{j\pi^\dagger(j)}-a_{i\pi^\dagger(j)}-a_{j\pi^\dagger
(i)}=1),
\nonumber
\\
&& I(a_{i'\pi^\dagger(i')}+a_{j'\pi^\dagger(j')}-a_{i'\pi^\dagger
(j')}-a_{j'\pi^\dagger(i')}=1)
\bigr]
\nonumber
\\
&&+\sum_{|i,j,i',j'|=4} \cov\bigl[I(a_{i\pi^\dagger(i)}+a_{j\pi^\dagger
(j)}-a_{i\pi^\dagger(j)}-a_{j\pi^\dagger(i)}=1),
\nonumber
\\
&& I(a_{i'\pi^\dagger(i')}+a_{j'\pi^\dagger(j')}-a_{i'\pi^\dagger
(j')}-a_{j'\pi^\dagger(i')}=1)
\bigr]
\nonumber
\\
&=& R_3+R_4.
\end{eqnarray}
$R_3$ is bounded by
%
\begin{eqnarray}
|R_3|\le\frac{5m-8}{m(m-1)} \sum_{i\ne j, i'\ne j'}^n
I(a_{ii'}+a_{jj'}-a_{ij'}-a_{ji'}=1).
\end{eqnarray}
For $R_4$,
\begin{eqnarray}
R_4&=&\frac{1}{(m)_4} \sum_{|i,j,i',j'|=4}
\sum_{|k,l,k',l'|=4} I(a_{ik}+a_{jl}-a_{il}-a_{jk}=1)
\nonumber
\\
&& \times I(a_{i'k'}+a_{j'l'}-a_{i'l'}-a_{j'k'}=1)
\nonumber
\\
&&-\frac{1}{m^2(m-1)^2} \sum_{|i,j,i',j'|=4} \sum
_{k\ne l, k'\ne
l'}^m I(a_{ik}+a_{jl}-a_{il}-a_{jk}=1)
\nonumber
\\
&& \times I(a_{i'k'}+a_{j'l'}-a_{i'l'}-a_{j'k'}=1)
\nonumber
\\
&=&\frac{4m-6}{m(m-1)(m)_4} \sum_{|i,j,i',j'|=4} \sum
_{|k,l,k',l'|=4} I(a_{ik}+a_{jl}-a_{il}-a_{jk}=1)
\nonumber
\\
&& \times I(a_{i'k'}+a_{j'l'}-a_{i'l'}-a_{j'k'}=1)
\nonumber
\\
&&-\frac{1}{m^2(m-1)^2} \sum_{k\ne l, k'\ne l', |k,l,k',l'|\le3}
I(a_{ik}+a_{jl}-a_{il}-a_{jk}=1)
\nonumber
\\
&& \times I(a_{i'k'}+a_{j'l'}-a_{i'l'}-a_{j'k'}=1).
\nonumber
\end{eqnarray}
It can be shown that
%
\begin{eqnarray}
|R_4|\le\frac{(5m-8)(m-2)(m-3)}{m^2(m-1)^2} \sum_{i\ne j, i'\ne
j'}^n
I(a_{ii'}+a_{jj'}-a_{ij'}-a_{ji'}=1)
\end{eqnarray}
where we omitted one indicator for each summand to obtain the upper
bound. Therefore,
\begin{eqnarray}
&&\Var\bigl(\E\bigl(I\bigl(V-V'=1\bigr)|V\bigr)\bigr)
\nonumber
\\
&\le& \frac{(5m-8)(2m^2-6m+6) }{ m^4(m-1)^4} \sum_{i\ne j, i'\ne
j'}^n
I(a_{ii'}+a_{jj'}-a_{ij'}-a_{ji'}=1).
\nonumber
\end{eqnarray}
Similarly,
\begin{eqnarray}
&&\Var\bigl(\E\bigl(I\bigl(V-V'=-1\bigr)|V\bigr)\bigr)
\nonumber
\\
&\le& \frac{(5m-8)(2m^2-6m+6) }{ m^4(m-1)^4} \sum_{i\ne j, i'\ne
j'}^n
I(a_{ii'}+a_{jj'}-a_{ij'}-a_{ji'}=1).
\nonumber
\end{eqnarray}
With
%
\begin{eqnarray}
\P\bigl(V-V'=1\bigr)=\frac{1}{m^2(m-1)^2} \sum
_{i\ne j, i'\ne j'}^m I(a_{ii'}+a_{jj'}-a_{ij'}-a_{ji'}=1),
\end{eqnarray}
we have, by Lemma~\ref{6-l1},
%
\begin{eqnarray}
&&d_{\mathrm{TV}}\bigl(\mathcal{L}(V), \mathcal{L}(V+1)\bigr)
\nonumber
\\
&\le& \frac{2\sqrt{(5m-8)(2m^2-6m+6) \sum_{i\ne j, i'\ne j'}^n
I(a_{ii'}+a_{jj'}-a_{ij'}-a_{ji'}=1) }}{ \sum_{i\ne j, i'\ne j'}^n
I(a_{ii'}+a_{jj'}-a_{ij'}-a_{ji'}=1) -8n^3}.
\end{eqnarray}
The term $-8n^3$ in the above denominator occurs because we arbitrarily
deleted two rows and two columns from the original matrix. From all the
above bounds, we obtain
%
\begin{eqnarray}
\label{6.67} &&d_{\mathrm{TV}}\bigl(\mathcal{L}(S), N^d\bigl(\mu,
\sigma^2\bigr)\bigr)
\nonumber
\\
&\le& \frac{\sqrt {5c_1+16c_2+99c_3}}{n^{1/2}c_1-n^{3/2}c_1^2+n^{3/2}c_3}+\sqrt{\frac
{\pi}{8}} \frac{c_1+3(n/(n-1))^2c_3}{(nc_1-n^2 c_1^2+n^2
c_3)^{3/2}/(n-1)}
\nonumber
\\
&&+\frac{n-1}{(nc_1-n^2c_1^2+n^2c_3)^{3/2}}\sqrt{\frac
{1}{4}c_1+\frac{31}{4}
\biggl(\frac{n}{n-1}\biggr)^2 c_3}
\nonumber
\\
&&+\biggl(\frac{n(c_1+3(\frac{n}{n-1})^2
c_3)}{nc_1-n^2c_1^2+n^2c_3}+1\biggr)\times\frac{\sqrt{10n^7 c_4}}{n^4c_4-8n^3}.
\end{eqnarray}
Therefore, proposition \ref{6-p1} is proved.
\end{pf}
}

\subsection{Size biasing}
Size biasing was first introduced in the context of Stein's method by
Goldstein and Rinott \cite{GoRi96}.
For $S$ being a nonnegative integer valued random variable with mean
$\mu$, we say $S^s$ has the $S$-size biased distribution if
\begin{eqnarray*}
\E Sf(S) =\E\mu f\bigl(S^s\bigr)
\end{eqnarray*}
for all $f$ such that the above expectations exist. If in addition
$S^s$ is defined on the same probability space as $S$, then
%
\begin{equation}
\label{6.1-5} \bigl(S,S',G\bigr)=\bigl(S, S^s, \mu
\bigr)
\end{equation}
is a Stein coupling.
Theorem~\ref{6-t1} has the following corollary for size biasing which
easily follows from \eqref{6.1-5}.
%
\begin{cor}\label{6-c3}
Let $S$ be a nonnegative integer valued random variable with mean $\mu
$ and variance $\sigma^2$. Let $S^s$ be defined on the same
probability space and have the $S$-size biased distribution. Then
%
\begin{eqnarray}
\label{6-c3-1} &&d_{\mathrm{TV}}\bigl(\mathcal{L}(S), N^d\bigl(\mu,
\sigma^2\bigr)\bigr)\nonumber
\\
&&\quad \le\frac{2\mu}{\sigma^2} \sqrt{\Var\bigl(\E\bigl(S^s-S|S\bigr)
\bigr)}+\sqrt{\frac
{\pi}{8}} \frac{\mu}{\sigma^3} \E\bigl|S^s-S\bigr|^2+
\frac{\mu}{\sigma^3} \sqrt{\E\bigl|S^s-S\bigr|^4}
\\
&&\qquad {} +\frac{\mu}{2\sigma^2} \E \bigl[ \bigl( \bigl|S^s-S\bigr|^2+\bigl|S^s-S\bigr|
\bigr) d_{\mathrm{TV}} \bigl(\mathcal{L}(S|\mathcal{F}),\mathcal{L}(S+1|
\mathcal{F})\bigr) \bigr],\nonumber
\end{eqnarray}
where $\mathcal{F}$ is a $\sigma$-field such that $\sigma
(S^s-S)\subset\mathcal{F}$.
\end{cor}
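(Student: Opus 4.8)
The plan is to apply Theorem \ref{6-t1} to the Stein coupling $(S,S',G)=(S,S^s,\mu)$ of \eq{6.1-5} and simply read off each of the four terms. Since the corollary asserts that it "easily follows" from \eq{6.1-5}, I expect the argument to be a direct specialization with no substantive obstacle; the only points needing a word of care are the verification of the coupling property and the sign of $\mu$.

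First I would confirm that \eq{6.1-5} is a Stein coupling in the sense of Definition \ref{6-d1}. Taking $G=\mu$ (a constant) and $S'=S^s$ in \eq{6-d1-1}, the left-hand side is $\mu\E f(S^s)-\mu\E f(S)$; the defining property of the $S$-size biased distribution, $\E Sf(S)=\E\mu f(S^s)$, gives $\mu\E f(S^s)=\E Sf(S)$, so the left-hand side equals $\E Sf(S)-\mu\E f(S)=\E(S-\mu)f(S)$, which is exactly \eq{6-d1-1}. (Here one uses implicitly that $\mu>0$, which holds whenever $S$ is not a.s.\ zero, so that the size biased distribution exists; the hypotheses of Theorem \ref{6-t1}, namely that $S$ is integer valued with finite variance $\sigma^2$, are part of the assumptions of the corollary.)

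Next, with $G\equiv\mu$ and $D=S^s-S$, I would substitute into \eq{6-t1-1} term by term. Because $G$ is constant, $\E(GD|S)=\mu\,\E(S^s-S|S)$, so $\Var(\E(GD|S))=\mu^2\Var(\E(S^s-S|S))$, producing the first term $\frac{2\mu}{\sigma^2}\sqrt{\Var(\E(S^s-S|S))}$. Since $\mu\ge 0$ we have $GD^2=\mu(S^s-S)^2\ge 0$ and $G^2D^4=\mu^2(S^s-S)^4$, whence $\E|GD^2|=\mu\,\E|S^s-S|^2$ and $\sqrt{\E G^2D^4}=\mu\sqrt{\E|S^s-S|^4}$, giving the second and third terms. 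For the last term, $|GD^2|+|GD|=\mu(|S^s-S|^2+|S^s-S|)$, so the factor $\mu/(2\sigma^2)$ comes outside the expectation. Finally, $\mathcal{B}(G)=\{\emptyset,\Omega\}$, so the requirement $\mathcal{B}(G,D)\subset\mathcal{F}$ of Theorem \ref{6-t1} reduces to $\mathcal{B}(S^s-S)\subset\mathcal{F}$, exactly as stated in the corollary. Collecting these identities yields \eq{6-c3-1}. The "hard part", such as it is, is merely the bookkeeping of matching the specialized quantities to the general bound; the conceptual work is entirely contained in Theorem \ref{6-t1}.
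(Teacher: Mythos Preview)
Your proposal is correct and is exactly the argument the paper intends: it simply says the corollary ``easily follows from \eq{6.1-5}'', i.e., from the Stein coupling $(S,S^s,\mu)$, and your term-by-term substitution of $G=\mu$, $D=S^s-S$ into \eq{6-t1-1} together with the observation that $\mathcal{B}(G,D)=\mathcal{B}(S^s-S)$ is precisely that specialization.
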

Next, we apply Corollary~\ref{6-c3} to bound the total variation
distance for discretized normal approximations for the number of
vertices with a given degree in the Erd\"os--R\'enyi random graph, and
the uniform multinomial occupancy model. These two models were recently
studied by Goldstein \cite{Go12} and Bartroff and Goldstein \cite
{BaGo12}, respectively. They obtained the same bound for the Kolmogorov
distance using the inductive size bias coupling technique introduced by
Goldstein \cite{Go12}.

\subsubsection{Number of vertices with a given degree in the Erd\"
os--R\'enyi random graph}
Let $G(n,p_n)$ be an Erd\"os--R\'enyi random graph with vertex set $\{
1,\dots,n\}$ and edge probability $p_n\in(0,1)$. Let $S_n$ be the
number of vertices with a given degree $d\ge0$ in $G(n, p_n)$.
The asymptotic normality of $S_n$ was proved in \cite{BaKaRu89} when
$n p_n \rightarrow\theta>0$. Under the condition
%
\begin{eqnarray}
\label{2.1} &&\mbox{there exist } 0<\theta'\le\theta_n
\le\theta''<\infty, n_0>0 \mbox{ such
that}
\nonumber
\\[-8pt]\\[-8pt]
&&\quad p_n=\theta_n/(n-1)\qquad  \mbox{for all } n\ge
n_0,\nonumber
\end{eqnarray}
Goldstein \cite{Go12} proved a bound on the Kolmogorov distance
between the distribution of $S_n$ and $N(\mu_n, \sigma_n^2)$,
\begin{eqnarray*}
d_K\bigl(\mathcal{L}(S_n), N\bigl(\mu_n,
\sigma_n^2\bigr)\bigr) \le c_d/\sqrt{n},
\end{eqnarray*}
where $\mu_n$ and $\sigma_n^2$ are the mean and variance of $S_n$,
respectively. Here and in the rest of this example, let $c_d=c(d,\theta
',\theta'',n_0)$ denote positive constants which may depend on
$d,\theta',\theta'',n_0$. In the following proposition, we prove a
bound on the total variation distance between the distribution of $S_n$
and $N^d(\mu_n, \sigma_n^2)$.
%
\begin{prop}\label{prop3}
Let $G(n,p_n)$, $n\geq2$, be a sequence of Erd\"os--R\'enyi random
graphs satisfying \eqref{2.1}.
Let $S_n$ be the number of vertices with a given degree $d$ in $G(n,
p_n)$. We have
%
\begin{equation}
\label{prop3-1} d_{\mathrm{TV}}\bigl(\mathcal{L}(S_n),
N^d\bigl(\mu_n, \sigma_n^2\bigr)
\bigr) \le c_d/\sqrt{n}.
\end{equation}
\end{prop}
\begin{pf}
Since the total variation distance is always bounded by $1$,
for $n<\max\{n_0, 8\}$, \eqref{prop3-1} holds true by choosing
$c_d=\max
\{\sqrt{n_0}, 2\sqrt{2}\}$. Therefore, we assume $n\ge\max\{n_0, 8\}
$ in the rest of the proof.

In \cite{Go12}, it was proved that under condition \eqref{2.1},
%
\begin{equation}
\label{132}  \frac{n}{c_d} \leq \mu_n \leq c_d n, \qquad  \frac{n}{c_d} \leq \sigma_n^2 \leq c_d n.
\end{equation}
Let $\deg(i)$ denote the degree of vertex $i$. Then $S_n$ can be
expressed as
\begin{eqnarray*}
S_n=\sum_{i=1}^n I
\bigl(\deg(i)=d\bigr).
\end{eqnarray*}
Following the construction of size bias coupling in Goldstein and
Rinott \cite{GoRi96}, let $I$ be uniformly chosen from $\{1,\dots,n\}
$ and independent of $G(n,p_n)$. If $\deg(I)=d$, then we define
$G^s(n,p_n)$, the size biased graph, to be the same as $G(n,p_n)$. If
$\deg(I)>d$, then we obtain $G^s(n,p_n)$ from $G(n,p_n)$ by removing
$\deg(I)-d$ edges chosen uniformly at random from the edges that connect
to $I$ in $G(n,p_n)$. If $\deg(I)<d$, then we obtain $G^s(n,p_n)$ from
$G(n,p_n)$ by connecting $I$ to $d-\deg(I)$ vertices chosen uniformly at
random from those not connected to $I$ in $G(n, p_n)$. Let $S_n^s$ be
the number of vertices with degree $d$ in the graph $G^s(n,p_n)$. It
was proved in \cite{GoRi96} that $S_n^s$ has the $S_n$-size biased
distribution and
%
\begin{equation}
\label{133} \Var\bigl(\E\bigl(S_n^s-S_n|S_n
\bigr)\bigr)\le c_d/n.
\end{equation}
From the construction of $G^s(n,p_n)$, at most $|\deg(I)-d|+1$ vertices
have different degrees in $G(n,p_n)$ and $G^s(n,p_n)$. Therefore,
%
\begin{equation}
\label{2.2} \bigl|S_n^s-S_n\bigr|\le\bigl|\deg(I)-d\bigr|+1.
\end{equation}
Given $I$, $\deg(I)\sim \Binomial(n-1,p_n)$. This, together with \eqref
{2.1}, implies that for any positive integer $k\leq4$,
%
\begin{equation}
\label{2.2a} \E \deg(I)^k \leq c_d.
\end{equation}
From \eqref{2.2} and \eqref{2.2a},
%
\begin{equation}
\label{134} \E\bigl|S_n^s-S_n\bigr|^k
\le c_d,\qquad  k\leq4.
\end{equation}
Applying \eqref{132}, \eqref{133} and \eqref{134} in \eqref
{6-c3-1}, the proof
will be complete after we show that
%
\begin{equation}
\label{2.3} \E \bigl[ \bigl(\bigl|S_n^s-S_n\bigr|^2+\bigl|S_n^s-S_n\bigr|
\bigr)d_{\mathrm{TV}}\bigl(\mathcal{L}(S_n|\mathcal {F}),
\mathcal{L}(S_n+1|\mathcal{F})\bigr) \bigr]\le c_d/
\sqrt{n}
\end{equation}
for a $\sigma$-field $\mathcal{F}$ such that $\sigma
(S_n^s-S_n)\subset\mathcal{F}$.
For a given $I$, define
\begin{eqnarray*}
A_I=\{I\} \cup\bigl\{j: e_{Ij}=1 \mbox{ or }
e^s_{Ij}=1 \bigr\},\qquad  B_I=\{k\notin
A_I: e_{kj}=1 \mbox{ for some} j\in A_I\},
\end{eqnarray*}
where $e_{uv}$ ($e^s_{uv}$) is the indicator that there is an edge
connecting $u$ and $v$ in $G(n,p_n)$ ($G^s(n,p_n)$). Let
%
\begin{equation}
\label{2.4} \mathcal{F}=\sigma\bigl(I, A_I, B_I,
\{e_{uv}: u\in A_I, v\in A_I\cup
B_I\} , \bigl\{e^s_{Iv}: v\in
A_I\bigr\}\bigr).
\end{equation}
From the construction of $G^s(n,p_n)$, we have $\sigma
(S^s_n-S_n)\subset\mathcal{F}$. Let $|\cdot|$ denote cardinality
when the argument is a set. From \eqref{2.2}, \eqref{2.2a} and
$|A_I|= \max
(\deg(I),d)+1$,
\begin{eqnarray*}
&&\E\bigl(\bigl|S_n^s-S_n\bigr|^2+\bigl|S_n^s-S_n\bigr|
\bigr)I\bigl(|A_I|>\sqrt{n}\bigr)
\\
&&\quad \le2\E|A_I|^2 I\bigl(|A_I|>\sqrt{n}\bigr)\le
\frac{2}{\sqrt{n}} \E|A_I|^3
\\
&&\quad = \frac{2}{\sqrt{n}}\E\bigl(\max\bigl(\deg(I),d\bigr)+1\bigr)^3
\\
&&\quad \le c_d/\sqrt{n}.
\end{eqnarray*}
Similarly,
\begin{eqnarray*}
&&\E\bigl(\bigl|S_n^s-S_n\bigr|^2+\bigl|S_n^s-S_n\bigr|
\bigr)I\bigl(|B_I|>\sqrt{n}\bigr)\\
&&\quad  \le2\E|A_I|^2
|B_I|/\sqrt{n}
\le 2\E|A_I|^2 \bigl[ \E\bigl(|B_I||I,
A_I\bigr) \bigr]/\sqrt{n}\le c_d \E |A_I|^3/
\sqrt{n}\le c_d/\sqrt{n},
\end{eqnarray*}
where we used $\E(|B_I||I, A_I)\leq c_d|A_I|$, which is from the fact
that the expected degree of a given vertex is bounded by $c_d$ under
condition \eqref{2.1}.
Therefore, to prove \eqref{2.3}, we only need to prove
%
\begin{eqnarray}
\label{2.5}
&&\E \bigl[ \bigl(\bigl|S_n^s-S_n\bigr|^2+\bigl|S_n^s-S_n\bigr|
\bigr)I\bigl(|A_I|, |B_I|\le\sqrt {n}\bigr)d_{\mathrm{TV}}\bigl(
\mathcal{L}(S_n|\mathcal{F}), \mathcal{L}(S_n+1|\mathcal
{F})\bigr) \bigr]\nonumber\\[-8pt]\\[-8pt]
&&\quad \le c_d/\sqrt{n},\nonumber
\end{eqnarray}
where $\mathcal{F}$ was defined in \eqref{2.4}. Given $\mathcal{F}$
with $|A_I|, |B_I|\leq\sqrt{n}$, we define a random graph
$G^{\mathcal{F}}$ with vertex set $\{1,\dots, n\}$ by letting
$e^{\mathcal{F}}_{uv}=e_{uv}$ for $u\in A_I, v\in\{1,\dots, n\}$,
and letting $e^{\mathcal{F}}_{uv}$ be independent $\operatorname{Bernoulli}(p_n)$
random variables for $u,v \in(A_I)^c$ where $e^{\mathcal{F}}$ is the
edge indicator for $G^{\mathcal{F}}$. Let $V^{\mathcal{F}}=\sum_{i=1}^n I(\deg^{\mathcal{F}}(i)=d)$ be the number of vertices with
degree $d$ in $G^{\mathcal{F}}$. Then $\mathcal{L}(V^{\mathcal
{F}})=\mathcal{L}(S_n|\mathcal{F})$, which follows from $\mathcal
{L}(G^{\mathcal{F}})=\mathcal{L}(G(n,p_n)|\mathcal{F})$.

In the following we fix a given $\mathcal{F}$ with $|A_I|, |B_I|\leq
\sqrt{n}$, and prove
%
\begin{equation}
\label{2.5a} d_{\mathrm{TV}} \bigl(\mathcal{L}\bigl(V^{\mathcal{F}}\bigr),
\mathcal{L}\bigl(V^{\mathcal
{F}}+1\bigr)\bigr)\le c_d/\sqrt{n}.
\end{equation}
For ease of notation, we suppress the superscript $\mathcal{F}$, that
is, let $G=G^{\mathcal{F}}, V=V^{\mathcal{F}}, e=e^\mathcal{F},
\deg=\deg^{\mathcal{F}}$.
To bound $d_{\mathrm{TV}}(\mathcal{L}(V),\mathcal{L}(V+1))$ using Lemma~\ref
{6-l1} and Remark~\ref{exchconst}, we construct an exchangeable pair
$(V,V')$ by uniformly choosing $J\ne K$ from $C_I:=(A_I\cup B_I)^c$ and
independently resampling $e_{JK}$ to be $e'_{JK}$. Writing out all four
possibilities for $\{V-V'=1\}$,
%
\begin{eqnarray}
\label{135} I\bigl(V-V'=1\bigr)&=&(1-e_{JK})e'_{JK}
\bigl\{ I\bigl(\deg(J)=d\bigr)I\bigl(\deg(K)\ne d-1,d\bigr)\nonumber
\\
&&\hphantom{(1-e_{JK})e'_{JK}
\bigl\{}{}+I\bigl(\deg(J)\ne d-1,d\bigr)I\bigl(\deg(K)=d\bigr) \bigr\}\nonumber
\\[-8pt]\\[-8pt]
&&{}+e_{JK}\bigl(1-e'_{JK}\bigr) \bigl
\{I\bigl(\deg(J)=d\bigr)I\bigl(\deg(K)\ne d,d+1\bigr)\nonumber
\\
&&\hphantom{+e_{JK}\bigl(1-e'_{JK}\bigr) \bigl
\{}{}+I\bigl(\deg(J)\ne d,d+1\bigr)I\bigl(\deg(K)=d\bigr) \bigr\}.\nonumber
\end{eqnarray}
Let $m=|C_I|\ge n-2\sqrt{n}\ge2$ (recall $n\ge8$), and let $\xi_1,
\xi_2$ be independent $\Binomial(|B_I|+m-2,p_n)$ random variables.
Taking expectation on both sides of \eqref{135}, lower bounding the
right-hand side by its first term and observing that $\deg(J)$ and
$\deg(K)$ are independent $\sim \Binomial(|B_I|+m-2, p_n)$ given that $J$
is not connected to $K$ and $J, K\in C_I$ (thus not connected to
$A_I$), we have
\begin{eqnarray*}
\P\bigl(V-V'=1\bigr)&\ge& \E(1-e_{JK})
e'_{JK} I\bigl(\deg(J)=d\bigr)I\bigl(\deg(K)\ne d-1,d
\bigr)
\\
&=& \frac{1}{m(m-1)} \sum_{j, k \in C_I: j\ne k}(1-p_n)p_n
\P(\xi _1=d)\P(\xi_2\ne d-1,d).
\end{eqnarray*}
From \eqref{2.1} and $n-2\sqrt{n}\leq|B_I|+m-2\leq n$, we have, for
some positive constant $c_d$,
\begin{eqnarray*}
p_n\geq\frac{c_d}{n},\qquad  1-p_n\geq c_d,\qquad
\P(\xi_1=d)\geq c_d, \qquad \P(\xi _2\ne d-1, d)\geq
c_d.
\end{eqnarray*}
Therefore,
%
\begin{equation}
\label{136} \P\bigl(V-V'=1\bigr)\ge c_d/n.
\end{equation}
Next, we obtain an upper bound of $\Var(\E(I(V-V'=1)|V))$. By taking
expectation with respect to $J, K$ first and then writing the variance
of a sum as a sum of covariances, we have
\begin{eqnarray*}
&&\Var\bigl(\E\bigl((1-e_{JK})e'_{JK}I
\bigl(\deg(J)=d\bigr)I\bigl(\deg(K)\ne d-1,d\bigr)|V\bigr)\bigr)
\\
&&\quad \le\Var\bigl(\E\bigl((1-e_{JK})e'_{JK}I
\bigl(\deg(J)=d\bigr)I\bigl(\deg(K)\ne d-1,d\bigr)|G, \mathcal{F}\bigr)\bigr)
\\
&&\quad \le\frac{c_d}{n^4}\Var \biggl[ \sum_{j,k\in C_I: j\ne
k}(1-e_{jk})e'_{jk}I
\bigl(\deg(j)=d\bigr)I\bigl(\deg(k)\ne d-1,d\bigr) \biggr]
\\
&&\quad =\frac{c_d}{n^4} \sum_{j,k,j',k'\in C_I: \atop j\ne k, j'\ne k', |\{
j,k,j',k'\}|=2}\cov \bigl[
(1-e_{jk})e'_{jk}I\bigl(\deg(j)=d\bigr)I
\bigl(\deg(k)\ne d-1,d\bigr),
\\
&&\hphantom{\quad =\frac{c_d}{n^4} \sum_{j,k,j',k'\in C_I:\atop j\ne k, j'\ne k', |\{
j,k,j',k'\}|=2}\cov \bigl[} (1-e_{j'k'})e'_{j'k'}I\bigl(\deg
\bigl(j'\bigr)=d\bigr)I\bigl(\deg\bigl(k'\bigr)\ne
d-1,d\bigr) \bigr]
\\
&&\qquad {}+\frac{c_d}{n^4}\sum_{j,k,j',k'\in C_I:\atop j\ne k, j'\ne k', |\{
j,k,j',k'\}|=3}\cov \bigl[
(1-e_{jk})e'_{jk}I\bigl(\deg(j)=d\bigr)I
\bigl(\deg(k)\ne d-1,d\bigr),
\\
&&\hphantom{\qquad {}+\frac{c_d}{n^4}\sum_{j,k,j',k'\in C_I: \atop j\ne k, j'\ne k', |\{
j,k,j',k'\}|=3}\cov \bigl[} (1-e_{j'k'})e'_{j'k'}I\bigl(\deg
\bigl(j'\bigr)=d\bigr)I\bigl(\deg\bigl(k'\bigr)\ne
d-1,d\bigr) \bigr]
\\
&&\qquad {}+\frac{c_d}{n^4}\sum_{j,k,j',k'\in C_I:\atop |\{j,k,j',k'\}|=4}\cov \bigl[
(1-e_{jk})e'_{jk}I\bigl(\deg(j)=d\bigr)I
\bigl(\deg(k)\ne d-1,d\bigr),
\\
&&\hphantom{\qquad {}+\frac{c_d}{n^4}\sum_{j,k,j',k'\in C_I:\atop |\{j,k,j',k'\}|=4}\cov \bigl[} (1-e_{j'k'})e'_{j'k'}I\bigl(\deg
\bigl(j'\bigr)=d\bigr)I\bigl(\deg\bigl(k'\bigr)\ne
d-1,d\bigr) \bigr].
\end{eqnarray*}
Since $\E e'_{jk}\le c_d/n$, the first two terms in the above bound are
bounded by $c_d/n^3$. To bound the last term, for any $j,k,j',k'\in
C_I$ with $|\{j,k,j',k'\}|=4$, let $C$ be the event that there is no
edge connecting $\{j,k\}$ and $\{j',k'\}$ and define
\begin{eqnarray*}
&&a_{jk}=(1-e_{jk})e_{jk}'I
\bigl(\deg(j)=d\bigr)I\bigl(\deg(k)\ne d-1, d\bigr),
\\
&&\alpha=\E\bigl[(1-e_{jk})e'_{jk}I
\bigl(\deg(j)=d\bigr)I\bigl(\deg(k)\ne d-1,d\bigr)|C\bigr],
\\
&&\beta=\E\bigl[(1-e_{jk})e'_{jk}I\bigl(\deg(j)=d
\bigr)I\bigl(\deg(k)\ne d-1,d\bigr)\bigr].
\end{eqnarray*}
From the conditional independence between $a_{jk}$ and $a_{j'k'}$ given
$C$, $\P(C^c)\le c_d/n$ and $\E e_{jk}'\le c_d/n$, we have
\begin{eqnarray*}
&& \bigl| \cov \bigl[ (1-e_{jk})e'_{jk}I
\bigl(\deg(j)=d\bigr)I\bigl(\deg(k)\ne d-1,d\bigr),
\\
&&\qquad  (1-e_{j'k'})e'_{j'k'}I\bigl(\deg
\bigl(j'\bigr)=d\bigr)I\bigl(\deg\bigl(k'\bigr)\ne
d-1,d\bigr) \bigr] \bigr|
\\
&&\quad = |\E a_{jk}a_{j'k'}-\E a_{jk} \E
a_{j'k'}|
\\
&&\quad =\bigl|\E a_{jk} a_{j'k'}I(C)+\E a_{jk}a_{j'k'}I
\bigl(C^c\bigr)-\beta^2\bigr|
\\
&&\quad =\bigl|\E(a_{jk}|C)\E(a_{j'k'}|C)\P(C)-\beta^2+\E
a_{jk} a_{j'k'}I\bigl(C^c\bigr)\bigr|
\\
&&\quad \le \bigl|\alpha^2-\beta^2\bigr|+\alpha^2 \P
\bigl(C^c\bigr)+\E a_{jk} a_{j'k'}I
\bigl(C^c\bigr)
\\
&&\quad \le 2\E e_{jk}' |\alpha-\beta| +\bigl(\E
e_{jk}'\bigr)^2 \frac{c_d}{n}
\\
&&\quad \le c_d|\alpha-\beta|/n+c_d/n^3.
\end{eqnarray*}
Let
\begin{eqnarray*}
R=(1-e_{jk})I\bigl(\deg(j)=d\bigr)I\bigl(\deg(k)\ne d-1,d\bigr).
\end{eqnarray*}
We have
\begin{eqnarray*}
\alpha-\beta&=&\bigl(\E e_{jk}'\bigr) \bigl(\E(R|C)-\E
RI(C)-\E RI\bigl(C^c\bigr)\bigr)
\\
&=&\bigl(\E e_{jk}'\bigr)\P\bigl(C^c\bigr)
\bigl(\E(R|C)-\E\bigl(R|C^c\bigr)\bigr).
\end{eqnarray*}
Since $\E e_{jk}'\leq c_d/n$ and $\P(C^c)\le c_d/n$, we have $|\alpha
-\beta|\le c_d/n^2$. Therefore,
\begin{eqnarray*}
\Var\bigl(\E\bigl((1-e_{JK})e'_{JK}I
\bigl(\deg(J)=d\bigr)I\bigl(\deg(K)\ne d-1,d\bigr)|V\bigr)\bigr)\le
c_d/n^3.
\end{eqnarray*}
After bounding the variances of the other terms appearing in $\E
(I(V-V'=1)|V)$ by the same argument, we conclude that
%
\begin{equation}
\label{137} \Var\bigl(\E\bigl(I\bigl(V-V'=1\bigr)|V\bigr)\bigr)
\le c_d/n^3.
\end{equation}
Similarly,
%
\begin{equation}
\label{138} \Var\bigl(\E\bigl(I\bigl(V-V'=-1\bigr)|V\bigr)\bigr)
\le c_d/n^3.
\end{equation}
Applying \eqref{136}, \eqref{137} and \eqref{138} in \eqref
{6-l1-1}, we obtain
\eqref{2.5a}, which yields \eqref{2.5}.
\end{pf}

\subsubsection{Uniform multinomial occupancy model}

We consider the uniform multinomial occupancy model studied by Bartroff
and Goldstein \cite{BaGo12}, to which we refer for the literature on
this and related problems. Let $n\ge d\ge2, m\ge2$ be integers. Let
$S$ be the number of urns having occupancy $d$ when $n$ balls are
uniformly distributed among $m$ urns. Bartroff and Goldstein \cite
{BaGo12} proved
\begin{eqnarray*}
d_{K}\bigl(\mathcal{L}(S), N\bigl(\mu,\sigma^2\bigr)
\bigr) \le\frac
{c_d(1+(n/m)^3)}{\sigma},
\end{eqnarray*}
where $\mu, \sigma^2$ are the mean and variance of $S$ given by
%
\begin{eqnarray}
\label{occup-1} &&\mu=m{n \choose d}\frac{1}{m^d}\biggl(1-\frac{1}{m}
\biggr)^{n-d},
\\
\label{occup-2} &&\sigma^2=\mu-\mu^2+m(m-1){n \choose
d,d,n-2d}\frac
{1}{m^{2d}}\biggl(1-\frac{2}{m}\biggr)^{n-2d}
\end{eqnarray}
and $c_d$ is a constant only depending on $d$. Using Corollary~\ref
{6-c3}, we will prove the following bound on the total variation
distance between the distribution of $S$ and $N^d(\mu,\sigma^2)$.
%
\begin{prop}\label{occup-p}
Let $n\ge d\ge2, m\ge2$ be positive integers. Let $S$ be the number
of urns containing $d$ balls when $n$ balls are uniformly distributed
among $m$ urns. Then, with $\mu,\sigma^2$ given by \eqref{occup-1},
\eqref
{occup-2}, we have
%
\begin{equation}
\label{occup-p-1} d_{\mathrm{TV}}\bigl(\mathcal{L}(S), N^d\bigl(\mu,
\sigma^2\bigr)\bigr) \le\frac
{c_d(1+(n/m)^3)}{\sigma},
\end{equation}
where $c_d$ is a constant only depending on $d$.
\end{prop}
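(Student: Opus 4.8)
The plan is to invoke Corollary \ref{6-c3} with the size--biased coupling of Bartroff and Goldstein \cite{BaGo12}, running the argument in close parallel to the proof of Proposition \ref{prop3}. Write $S=\sum_{i=1}^{m} I(N_i=d)$, where $N_i$ is the number of balls in urn $i$ and $(N_1,\dots,N_m)$ is multinomial with $n$ trials and equal cell probabilities $1/m$; in particular each $N_i$ is Binomial$(n,1/m)$. Following \cite{BaGo12}, pick a uniform urn $I$ independent of the ball configuration and construct the size--biased configuration by forcing urn $I$ to hold exactly $d$ balls: if $N_I>d$ remove $N_I-d$ of its balls and place them in urns chosen uniformly among the others, if $N_I<d$ add $d-N_I$ balls taken from urns chosen uniformly among the others, and if $N_I=d$ do nothing; let $S^s$ count urns with occupancy $d$ in the new configuration. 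Standard arguments as in \cite{BaGo12} show $S^s$ has the $S$--size biased law, and since only urn $I$ and the at most $|N_I-d|$ urns touched by the reallocation can change occupancy, $|S^s-S|\le |N_I-d|+1$.

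The first three terms of \eqref{6-c3-1} are then routine bookkeeping. Because $N_I$ is Binomial$(n,1/m)$ we have $\E|N_I-d|^k\le c_d(1+(n/m)^k)$ for any fixed $k$, which controls $\E|S^s-S|^2$ and $\E|S^s-S|^4$; combined with the bound on $\Var(\E(S^s-S\mid S))$ established in \cite{BaGo12} and the estimates relating $\mu$ and $\sigma^2$ recorded there (the same ones that underlie the Kolmogorov bound $c_d(1+(n/m)^3)/\sigma$), each of the first three terms of \eqref{6-c3-1} is at most $c_d(1+(n/m)^3)/\sigma$. The only care needed is to track how the prefactors $\mu/\sigma^2$ and $\mu/\sigma^3$ combine with the increment moments to produce precisely the power $3$.

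The substantive step is the last term of \eqref{6-c3-1}, namely bounding $\E\bigl[(|S^s-S|^2+|S^s-S|)\,d_{TV}(\mathcal{L}(S\mid\mathcal{F}),\mathcal{L}(S+1\mid\mathcal{F}))\bigr]$. Let $A_I$ be the set of urns affected by the reallocation (urn $I$ together with the urns balls were moved to or from), and take $\mathcal{F}$ to record $I$, the set $A_I$, the occupancies of all urns in $A_I$ in both the original and the size--biased configuration, and the locations of all balls lying in urns of $A_I$; then $S^s-S$ is $\mathcal{F}$--measurable, so $\mathcal{B}(S^s-S)\subset\mathcal{F}$. Unlike the Erd\"os-R\'enyi example no analogue of the set $B_I$ is needed, since the indicator $I(N_i=d)$ depends only on the balls sitting in urn $i$: conditionally on $\mathcal{F}$ the remaining balls are i.i.d.\ uniform on the urns outside $A_I$, so $\mathcal{L}(S\mid\mathcal{F})$ is the law of a constant plus $V:=\sum_{i\notin A_I}I(\tilde N_i=d)$, a fresh occupancy count with fewer balls and fewer urns. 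After discarding (via H\"older's inequality and the binomial moment bounds) a good event on which $|A_I|$ and the number of balls in $A_I$ are suitably bounded — the complement contributing at most $c_d(1+(n/m)^3)/\sigma$ — it remains to bound $d_{TV}(\mathcal{L}(V),\mathcal{L}(V+1))$ on this good event. For that I would apply Lemma \ref{6-l1} with the exchangeable pair $(V,V')$ obtained by choosing a uniform one of the remaining balls and resampling its urn to an independent uniform urn outside $A_I$; then $I(V-V'=\pm1)$ is expressed through the before/after occupancies of the two affected urns, moving a single ball perturbs only those two urns so $\E(I(V-V'=1)\mid\text{config})$ has the local--dependence structure needed to bound its variance, and $\P(V-V'=1)\ge c_d\mu/m$, which via Lemma \ref{6-l1} gives $d_{TV}(\mathcal{L}(V),\mathcal{L}(V+1))\le c_d(1+(n/m)^3)/\sqrt{\mu}$, of the required order.

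The main obstacle I anticipate is exactly this last point: choosing $\mathcal{F}$ large enough that $\mathcal{B}(S^s-S)\subset\mathcal{F}$ and that $\mathcal{L}(S\mid\mathcal{F})$ genuinely reduces to a smaller occupancy problem, yet small enough that on a high--probability event the residual problem still has enough free balls and urns for the exchangeable--pair estimate to run with constants depending only on $d$ and the correct power of $n/m$ — high occupancies piled inside $A_I$ are the scenario that must be truncated away. Threading the $(1+(n/m)^3)$ dependence consistently through the truncation, the increment moments, and the Lemma \ref{6-l1} estimate is where the computation is delicate.
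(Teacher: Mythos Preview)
Your plan is correct and follows essentially the same route as the paper: the Bartroff--Goldstein size-bias coupling, Corollary \ref{6-c3}, the increment bound $|S^s-S|\le |N_I-d|+1$, the choice of $\mathcal{F}$ recording $I$ and the touched urns with their occupancies, a truncation on the event that the total occupancy of touched urns exceeds $\sqrt{n}$, reduction of $\mathcal{L}(S\mid\mathcal{F})$ to a smaller occupancy count $V$, and Lemma \ref{6-l1} applied to the exchangeable pair obtained by resampling the location of one uniformly chosen ball. The paper's computation of $\P(V-V'=1)$ and of the variances $\Var(\E(I(V-V'=\pm1)\mid V))$ uses an Efron--Stein-type identity from \cite{BaGo12} and yields a somewhat sharper intermediate estimate $d_{TV}(\mathcal{L}(V),\mathcal{L}(V+1))\le c_d(1+\sqrt{n/m})/\sigma$ than your stated $c_d(1+(n/m)^3)/\sqrt{\mu}$, but both suffice for the final bound.
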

%
\begin{rem}
Our approach should also work for the cases $d=0, 1$ if one could prove
similar results as Lemma~3.2 and (3.21) of \cite{BaGo12} and \eqref
{dagger3} below. However, we do not pursue it here.
\end{rem}
\begin{pf}
We follow the construction of size bias coupling in \cite{BaGo12}. For
a given $i\in\{1,\dots, m\}$, we define $m$-dimensional random
vectors $\mathbf{M}_n, \mathbf{M}_n^i$ as follows. Let $\langle\mathbf
{M}\rangle_i$ be the vector obtained by deleting the $i$th component of
$\mathbf{M}$. First, we define the $i$th components of $\mathbf{M}_n,
\mathbf{M}_n^i$ to be $M_n(i)\sim \Binomial(n,1/m), M_n^i(i)=d$. Next,
let $\mathbf{M}_{n,i}', \mathbf{R}_n^i$ be $m$-dimensional random
vectors conditionally independent given $M_n(i)$ such that
$M_{n,i}'(i)=R_n^i(i)=0$ and
\begin{eqnarray*}
\mathcal{L}\bigl(\bigl\langle\mathbf{M}_{n,i}'\bigr\rangle_i|M_n(i)
\bigr)=\operatorname{Multinomial} \bigl(n-\max\bigl\{ M_n(i), d\bigr\}, m-1\bigr)
\end{eqnarray*}
and
%
\begin{equation}
\label{139} \mathcal{L}\bigl(\bigl\langle\mathbf{R}_n^i\bigr\rangle_i|M_n(i)
\bigr)=\operatorname{Multinomial} \bigl(\bigl|d-M_n(i)\bigr|, m-1\bigr),
\end{equation}
where for positive integers $x$ and $y$, $\operatorname{Multinomial}(x,y)$ denotes the
distribution of the numbers of balls in $y$ urns when $x$ balls are
uniformly distributed among them.
Finally, let
\begin{eqnarray*}
\langle\mathbf{M}_n\rangle_i=\bigl\langle\mathbf{M}_{n,i}'\bigr\rangle_i+I
\bigl(M_n(i)<d\bigr)\bigl\langle\mathbf{R}_n^i\bigr\rangle_i
\end{eqnarray*}
and
\begin{eqnarray*}
\bigl\langle\mathbf{M}_n^i\bigr\rangle_i=\bigl\langle\mathbf{M}_{n,i}'\bigr\rangle_i
+ I\bigl(M_n(i)>d\bigr)\bigl\langle\mathbf{R}_n^i\bigr\rangle_i.
\end{eqnarray*}
From the above construction,
\begin{eqnarray*}
\mathcal{L}(\mathbf{M}_n)=\operatorname{Multinomial} (n,m),\qquad  \mathcal{L}\bigl(
\mathbf {M}_n^i\bigr)=\mathcal{L}\bigl(
\mathbf{M}_n|M_n(i)=d\bigr).
\end{eqnarray*}
Therefore, the number of urns having occupancy $d$ in the uniform
multinomial occupancy model can be written as
\begin{eqnarray*}
S=\sum_{j=1}^m I\bigl(M_n(j)=d
\bigr).
\end{eqnarray*}
Define
\begin{eqnarray*}
S^s=\sum_{j=1}^m I
\bigl(M_n^I(j)=d\bigr),
\end{eqnarray*}
where $I$ is uniformly distributed over $\{1,\dots, m\}$ and
independent of all other variables. It was proved in \cite{BaGo12}
that $S^s$ has the $S$-size biased distribution. We are now ready to
apply Corollary~\ref{6-c3}. In the rest of this proof, let $c_d$
denote absolute constants which may depend on $d$, and let $|\cdot|$
denote cardinality when the argument is a set.

By 4(a) of Lemma~3.2 and (3.21) of \cite{BaGo12}, for fixed $d$, there
exists a constant $r_d'$ such that if $\frac{\sigma}{1+(n/m)^3}\geq
r_d'$, then
%
\begin{equation}
\label{occup-4} \sqrt{\Var\bigl(\E\bigl(S^s-S|S\bigr)\bigr)} \le
c_d \frac{1+(n/m)^3}{\sqrt{n}}.
\end{equation}
By (3.18), (3.17), (3.16) and 4(a) of Lemma~3.2 of \cite{BaGo12}, there
exists another constant $r_d''$ such that if $\sigma\geq r_d''$, then
%
\begin{equation}
\label{occup-3} n\le2 m \log m,\qquad  \frac{\mu}{\sigma^2}\le c_d,\qquad
\sigma^2\le c_d n,\qquad  n>\max\bigl\{(d+1)^2, 100
\bigr\}.
\end{equation}
Let $r_d:=r_d'\vee r_d''$.
The range of $n$ and $m$ can be divided into two parts:
\begin{itemize}[(ii)]
\item[(i)] $\frac{\sigma}{1+(n/m)^3}<r_d$,
\item[(ii)] $\frac{\sigma}{1+(n/m)^3}\geq r_d$.
\end{itemize}

Since the total variation distance is always bounded by $1$, \eqref
{occup-p-1} holds true in case (i). Therefore, in the rest of the proof
we only need to consider case (ii), where \eqref{occup-4} and \eqref
{occup-3} hold.

Since $\mathbf{M}_n, \mathbf{M}_n^I$ differ by at most $|M_n(I)-d|+1$
components,
%
\begin{equation}
\label{occup-4a} \bigl|S^s-S\bigr|\le\bigl|M_n(I)-d\bigr|+1.
\end{equation}
Recall that given $I$, $M_n(I)\sim \Binomial(n,1/m)$. From the bounds on
the moments of binomial distributions,
%
\begin{equation}
\label{occup-5} \E\bigl|S^s-S\bigr|^k \le c_d\biggl(1+
\biggl(\frac{n}{m}\biggr)^k\biggr), \qquad k\leq4.
\end{equation}
The first three terms on the right-hand side of \eqref{6-c3-1} are
bounded by $c_d \frac{1+(n/m)^3}{\sigma} $ from \eqref{occup-4},
\eqref
{occup-3} and \eqref{occup-5}. Therefore, to prove Proposition~\ref
{occup-p}, we only need to show that
%
\begin{equation}
\label{occup-6} \E \bigl[ \bigl(\bigl|S^s-S\bigr|^2+\bigl|S^s-S\bigr|
\bigr)d_{\mathrm{TV}}\bigl(\mathcal{L}(S|\mathcal{F}), \mathcal{L}(S+1|
\mathcal{F})\bigr) \bigr]\le c_d \frac{1+(n/m)^3}{\sigma}
\end{equation}
for a $\sigma$-field $\mathcal{F}$ such that $\sigma(S^s-S)\subset
\mathcal{F}$.
Such a $\sigma$-field can be chosen as
\begin{eqnarray*}
\mathcal{F}=\sigma\bigl\{I, M_n(I), \mathbf{R}_n^I,
\bigl\{M_n(j): R_n^I(j)>0\bigr\} \bigr\}
\end{eqnarray*}
from the construction of $\mathbf{M}_n$ and $\mathbf{M}_n^I$.
Write
%
\begin{eqnarray}
\label{occup-7} &&\E \bigl[ \bigl(\bigl|S^s-S\bigr|^2+\bigl|S^s-S\bigr|
\bigr)d_{\mathrm{TV}}\bigl(\mathcal{L}(S|\mathcal{F}), \mathcal{L}(S+1|
\mathcal{F})\bigr) \bigr]\nonumber
\\
&&\quad =\E \biggl[ \bigl(\bigl|S^s-S\bigr|^2+\bigl|S^s-S\bigr|\bigr)\nonumber\\
&&\hphantom{\quad =\E \biggl[} {}\times I\biggl(M_n(I)+\sum_{j:
R_n^I(j)>0}M_n(j)>
\sqrt{n}\biggr) d_{\mathrm{TV}}\bigl(\mathcal{L}(S|\mathcal{F}), \mathcal{L}(S+1|
\mathcal{F})\bigr) \biggr]
\\
&&\qquad {} +\E \biggl[ \bigl(\bigl|S^s-S\bigr|^2+\bigl|S^s-S\bigr|
\bigr)\nonumber\\
&&\hphantom{\qquad {} +\E \biggl[} {}\times I\biggl(M_n(I)+\sum_{j:
R_n^I(j)>0}M_n(j)
\le\sqrt{n}\biggr) d_{\mathrm{TV}}\bigl(\mathcal{L}(S|\mathcal{F}),
\mathcal{L}(S+1|\mathcal{F})\bigr) \biggr].\nonumber
\end{eqnarray}
By the construction of $\mathbf{R}_n^I$, \eqref{139},
\begin{eqnarray*}
\bigl|\bigl\{j: R_n^I(j)>0\bigr\}\bigr|\leq\bigl|d-M_n(I)\bigr|.
\end{eqnarray*}
Also for each $j$ such that $R_n^I(j)>0$,\vspace*{1pt}
%
\begin{equation}
\label{1310} \E\bigl(M_n(j)| I, M_n(I),
\mathbf{R}_n^I\bigr)\leq M_n(I)+\E
B_{n,1/(m-1)}\qquad  \bigl(B_{n,p}\sim \Binomial(n, p)\bigr).
\end{equation}
For the first term on the right-hand side of \eqref{occup-7}, we bound
the total variation distance by $1$, and then apply \eqref{occup-4a},
\eqref
{1310}, and the bounds on the moments of binomial distributions,\vspace*{1pt}
%
\begin{eqnarray}
\label{occup-8} &&\E \biggl[ \bigl(\bigl|S^s-S\bigr|^2+\bigl|S^s-S\bigr|
\bigr) I\biggl(M_n(I)+\sum_{j:
R_n^I(j)>0}M_n(j)>
\sqrt{n}\biggr) d_{\mathrm{TV}}\bigl(\mathcal{L}(S|\mathcal{F}), \mathcal{L}(S+1|
\mathcal{F})\bigr) \biggr]
\nonumber
\\[1pt]
&&\quad \le\frac{2}{\sqrt{n}} \E\bigl(\bigl|M_n(I)-d\bigr|+1\bigr)^2
\biggl( M_n(I)+\sum_{j:
R_n^I(j)>0}M_n(j)
\biggr)
\nonumber
\\[1pt]
&&\quad \le\frac{c_d}{\sqrt{n}} \E \biggl\{ 1+\bigl(M_n(I)
\bigr)^3+\bigl(1+M_n(I)\bigr)^2 \E \biggl(
\sum_{j: R_n^I(j)>0}M_n(j) | I, M_n(I),
\mathbf{R}_n^I \biggr) \biggr\}
\\[1pt]
&&\quad \le\frac{c_d}{\sqrt{n}} \E \bigl\{ 1+\bigl(M_n(I)
\bigr)^3+\bigl(1+M_n(I)\bigr)^3
\bigl(M_n(I)+\E B_{n, 1/(m-1)}\bigr) \bigr\}
\nonumber
\\[1pt]
&&\quad \le c_d \frac{1+(n/m)^4}{\sqrt{n}} .\nonumber
\end{eqnarray}
By observing that for $n\le m$, we have $1/\sqrt{n}\leq c_d/\sigma$
from \eqref{occup-3}, and for $m<n\le2m\log m$, we have (see equation
(3.13) of \cite{BaGo12} with $\varphi_d(n/m)\le1$)\vspace*{1pt}
%
\begin{equation}
\label{occup-8d} \sigma^2 \le c_d m \biggl(
\frac{n}{m}\biggr)^d \mathrm{e}^{-n/m} \qquad \biggl(\mbox{therefore }
\frac{1}{\sqrt{n}}\leq\frac{c_d}{\sigma} \sqrt{\biggl(\frac
{n}{m}
\biggr)^{d-1} \mathrm{e}^{-n/m}} \biggr),
\end{equation}
the bound in \eqref{occup-8} can be further bounded by $c_d/\sigma$.

To bound the second term on the right-hand side of \eqref{occup-7},
for a
given $\mathcal{F}$ with $M_n(I)+\sum_{j: R_n^I(j)>0}M_n(j)\le\sqrt {n}$, let $V$ be the number of urns containing $d$ balls when $n_1$
balls are uniformly distributed among $m_1$ urns where\vspace*{1pt}
%
\begin{eqnarray}
\label{occup-8b} n_1&=&n-\biggl(M_n(I)+\sum
_{j: R_n^I(j)>0}M_n(j)\biggr) \ge n-\sqrt{n}\nonumber
\\[-8pt]\\[-8pt]
&> & d+1 \qquad  \bigl(\mbox{from } n>(d+1)^2 \mbox{ and } d\ge2\bigr)\nonumber
\end{eqnarray}
and\vspace*{1pt}
%
\begin{eqnarray}
\label{1319} m_1&= &m-1-\bigl|\bigl\{j: R_n^I(j)>0
\bigr\}\bigr| \ge m-1-\bigl|M_n(I)-d\bigr|\ge m-1-\sqrt{n}\nonumber
\\[-8pt]\\[-8pt]
&>& 2 \qquad (\mbox{from } n>100 \mbox{ and } n\le2m\log m).\nonumber
\end{eqnarray}
Then $d_{\mathrm{TV}}(\mathcal{L}(V), \mathcal{L}(V+1))=d_{\mathrm{TV}}(\mathcal
{L}(S|\mathcal{F}), \mathcal{L}(S+1|\mathcal{F}) )$.
To apply Lemma~\ref{6-l1}, we construct an exchangeable pair $(V, V')$
by picking a ball uniformly from the $n_1$ balls and distributing it to
an independently and uniformly chosen urn from the $m_1$ urns.
Formally, let $\mathbf{M}_{n_1}$ be an $m_1$-dimensional random vector
with distribution\vspace*{1pt}
\begin{eqnarray*}
\mathcal{L}(\mathbf{M}_{n_1})=\operatorname{Multinomial} (n_1,
m_1).
\end{eqnarray*}
Given $\mathbf{M}_{n_1}$, define two independent random variables $J,
K\in\{1,2,\dots, m_1\}$ with probability mass functions\vspace*{1pt}
\begin{eqnarray*}
\P(J=j)=\frac{M_{n_1}(j)}{n_1},\qquad  \P(K=k)=\frac{1}{m_1}.
\end{eqnarray*}
Given $\mathbf{M}_{n_1}, J, K$, if $J=K$, let $\mathbf
{M}_{n_1}'=\mathbf{M}_{n_1}$, and if $J\ne K$, let $\mathbf
{M}_{n_1}'$ be the $m_1$-dimensional vector with\vspace*{1pt}
\begin{eqnarray*}
M_{n_1}'(J)=M_{n_1}(J)-1,\qquad  M_{n_1}'(K)=M_{n_1}(K)+1
\end{eqnarray*}
and $M_{n_1}'(i)=M_{n_1}(i)$ for $i\ne J, K$. Define\vspace*{1pt}
\begin{eqnarray*}
V=\sum_{j=1}^{m_1} I\bigl(M_{n_1}(j)=d
\bigr)
\end{eqnarray*}
and\vspace*{1pt}
\begin{eqnarray*}
V'=\sum_{j=1}^{m_1} I
\bigl(M_{n_1}'(j)=d\bigr).
\end{eqnarray*}
From the above construction,\vspace*{1pt}
%
\begin{eqnarray}
\label{1311} &&\E\bigl(I\bigl(V-V'=1\bigr)|\mathbf{M}_{n_1}
\bigr)\nonumber\\[1pt]
&&\quad =\sum_{1\le j\ne k \le m_1}\frac
{M_{n_1}(j)}{m_1 n_1} \bigl[ I
\bigl(M_{n_1}(j)=d\bigr)I\bigl(M_{n_1}(k)\ne d-1, d\bigr)
\nonumber\\[1pt]
& &\hphantom{\quad =\sum_{1\le j\ne k \le m_1}\frac
{M_{n_1}(j)}{m_1 n_1} \bigl[}{}+ I\bigl(M_{n_1}(j)\ne d, d+1\bigr) I\bigl(M_{n_1}(k)=d
\bigr) \bigr],
\nonumber\\[-8pt]\\[-8pt]
&&\E\bigl(I\bigl(V-V'=-1\bigr)|\mathbf{M}_{n_1}\bigr)\nonumber\\[1pt]
&&\quad =
\sum_{1\le j\ne k\le m_1}\frac
{M_{n_1}(j)}{m_1 n_1} \bigl[ I
\bigl(M_{n_1}(j)\ne d, d+1\bigr) I\bigl(M_{n_1}(k)=d-1\bigr)\nonumber
\\[1pt]
& &\hphantom{\quad =
\sum_{1\le j\ne k\le m_1}\frac
{M_{n_1}(j)}{m_1 n_1} \bigl[}{} + I\bigl(M_{n_1}(j)=d+1\bigr) I\bigl(M_{n_1}(k)
\ne d-1, d\bigr) \bigr].\nonumber
\end{eqnarray}
Taking expectation on both sides of \eqref{1311},
%
\begin{eqnarray}
\label{1312} &&\P\bigl(V-V'=1\bigr)\nonumber\\
 &&\quad =\sum
_{1\le j\ne k\le m_1} \biggl[ \frac{d}{m_1 n_1} \P \bigl(M_{n_1}(j)=d,
M_{n_1}(k)\ne d-1, d\bigr)
\\
&&\hphantom{\quad =\sum
_{1\le j\ne k\le m_1} \biggl[}{} +\frac{1}{m_1 n_1}\E M_{n_1}(j) I\bigl(M_{n_1}(j)
\ne d, d+1\bigr) I\bigl(M_{n_1}(k)=d\bigr) \biggr].\nonumber
\end{eqnarray}
Let $B_{n,p}\sim \Binomial(n, p)$.
Because binomial distributions do not concentrate on two positive
integers, we claim that for a positive constant $c_d$,
\begin{eqnarray*}
\P(B_{n_1-d, \afrac{1}{m_1-1}}\ne d-1, d)\geq c_d
\end{eqnarray*}
and
\begin{eqnarray*}
\E B_{n_1-d,\afrac{1}{m_1-1}}I(B_{n_1-d,\afrac{1}{m_1-1}}\ne d,d+1 )\geq c_d
\frac{n_1}{m_1}.
\end{eqnarray*}
In fact, recall that $d\geq2$ and write out the binomial probabilities
explicitly, we have
\begin{eqnarray*}
&&\P(B_{n_1-d, \afrac{1}{m_1-1}}=d-1)+\P(B_{n_1-d, \afrac
{1}{m_1-1}}=d)
\\
&&\quad \leq c_d \bigl[ \P(B_{n_1-d, \afrac{1}{m_1-1}}=d-2)+\P(B_{n_1-d,
\afrac{1}{m_1-1}}=d+1)
\bigr],
\end{eqnarray*}
and
\begin{eqnarray*}
&&d\P(B_{n_1-d, \afrac{1}{m_1-1}}=d)+(d+1)\P(B_{n_1-d, \afrac
{1}{m_1-1}}=d+1)
\\
&&\quad \leq c_d \bigl[ (d-1)\P(B_{n_1-d, \afrac{1}{m_1-1}}=d-1)+(d+2)\P
(B_{n_1-d, \afrac{1}{m_1-1}}=d+2) \bigr],
\end{eqnarray*}
which lead to the claim.
Therefore,
%
\begin{eqnarray}
\label{1313} &&\P\bigl(M_{n_1}(j)=d, M_{n_1}(k)\ne d-1, d
\bigr)\nonumber
\\
&&\quad =\P(B_{n_1,\sfrac{1}{m_1}}=d) \P(B_{n_1-d,\afrac{1}{m_1-1}}\ne d-1,d )
\\
&&\quad \geq c_d \P(B_{n_1,\sfrac{1}{m_1}}=d)\nonumber
\end{eqnarray}
and
%
\begin{eqnarray}
\label{1314} &&\E M_{n_1}(j) I\bigl(M_{n_1}(j)\ne d, d+1
\bigr) I\bigl(M_{n_1}(k)=d\bigr)\nonumber
\\
&&\quad =\P(B_{n_1,\sfrac{1}{m_1}}=d) \E B_{n_1-d,\afrac{1}{m_1-1}} I(B_{n_1-d,\afrac{1}{m_1-1}}\ne d,d+1)
\\
&&\quad \geq c_d\frac{n_1}{m_1}\P(B_{n_1,\sfrac{1}{m_1}}=d).\nonumber
\end{eqnarray}
By \eqref{1312}, \eqref{1313} and \eqref{1314},
%
\begin{equation}
\label{occup-8a} \P\bigl(V-V'=1\bigr)\geq c_d\biggl(1+
\frac{m_1}{n_1}\biggr)\P(B_{n_1,\sfrac{1}{m_1}}=d).
\end{equation}
We proceed to bound
$\Var(\E(I(V-V'=1)|V))$ and $\Var(\E(I(V-V'=-1)|V))$. From \eqref
{1311} and the inequality $\Var(X+Y)\le2(\Var(X)+\Var(Y))$, we have
%
\begin{eqnarray}
\label{occup-9} &&\Var\bigl(\E\bigl(I\bigl(V-V'=1\bigr)|V\bigr)\bigr)\nonumber
\\
&&\quad \le\Var\bigl(\E\bigl(I\bigl(V-V'=1\bigr)|\mathbf{M}_{n_1}
\bigr)\bigr)\nonumber
\\[-8pt]\\[-8pt]
&&\quad \le\frac{2}{m_1^2 n_1^2} \biggl[ \Var \biggl(d \sum_{1\le j\ne
k\le m_1}I
\bigl(M_{n_1}(j)=d\bigr)I\bigl(M_{n_1}(k)\ne d-1, d\bigr)
\biggr)\nonumber
\\
&& \hphantom{\quad \le\frac{2}{m_1^2 n_1^2} \biggl[ }{}  + \Var \biggl( \sum_{1\le j\ne k\le m_1}
M_{n_1}(j) I\bigl(M_{n_1}(j)\ne d, d+1\bigr) I
\bigl(M_{n_1}(k)=d\bigr) \biggr) \biggr].\nonumber
\end{eqnarray}
Let
%
\begin{equation}
\label{1315} a_{n_1, m_1}(j,k):=I\bigl(M_{n_1}(j)=d\bigr)I
\bigl(M_{n_1}(k)\ne d-1, d\bigr),
\end{equation}
and let $U_l\in\{1,\dots,m_1\}$ denote the location of the $l$th
ball. Applying the arguments in Bartroff and Goldstein \cite{BaGo12}
(page 17, equation (3.41) and (3.42)),
%
\begin{eqnarray}
\label{1316}
&&\Var \biggl( \sum_{1\le j\ne k\le m_1}a_{n_1-1, m_1}(j,k)
\biggr)\nonumber
\\[-2pt]
&&\quad \le n_1 \E \biggl[ \sum_{1\le k\le m_1, k\ne U_{n_1}} \bigl(
a_{n_1, m_1, (n_1)}(U_{n_1},k)-a_{n_1,m_1}(U_{n_1},k)
\bigr)
\\[-2pt]
&&\hphantom{\quad \le n_1 \E \biggl[}{} + \sum_{1\le j\le m_1, j\ne U_{n_1}} \bigl( a_{n_1, m_1,
(n_1)}(j,
U_{n_1})-a_{n_1, m_1}(j, U_{n_1}) \bigr)
\biggr]^2,\nonumber
\end{eqnarray}
where $a_{n_1, m_1, (n_1)}(j,k)$ is the value of $a_{n_1, m_1}(j,k)$
when withholding ball $n_1$, that is,\vspace*{1pt}
%
\begin{equation}
\label{1317} a_{n_1, m_1, (n_1)}(j,k)=I\bigl(M_{n_1}^{(n_1)}(j)=d
\bigr)I\bigl(M_{n_1}^{(n_1)}(k)\ne d-1, d\bigr)
\end{equation}
with\vspace*{1pt}
\begin{eqnarray*}
M_{n_1}^{(n_1)}(j)= %
\cases{ M_{n_1}(j), &
\mbox{if} $j\ne U_{n_1}$,
\cr
M_{n_1}(j)-1, & \mbox{if}
$j=U_{n_1}$. } %
\end{eqnarray*}
By the definition of $U_l$, given $U_{n_1}$,\vspace*{1pt}
%
\begin{equation}
\label{1318} M_{n_1}(U_{n_1})-1\sim \Binomial
\biggl(n_1-1,\frac{1}{m_1}\biggr).
\end{equation}
Substituting \eqref{1315} and \eqref{1317} in \eqref{1316}, and then applying
the inequality $\E(\sum_{i=1}^n X_i)^2\leq n\E(X_i)^2$ and \eqref{1318},
we have
%
\begin{eqnarray}
\label{star1} &&\Var \biggl( \sum_{1\le j\ne k\le m_1}a_{n_1-1, m_1}(j,k)
\biggr)
\nonumber
\\[-2pt]
&&\quad \le n_1 \E \biggl\{ \sum_{1\le k\le m_1, k\ne U_{n_1}} \bigl[
I\bigl(M_{n_1}(U_{n_1})=d+1\bigr)I\bigl(M_{n_1}(k)
\ne d-1, d\bigr)
\nonumber
\\[-2pt]
&&\hphantom{\quad \le n_1 \E \biggl\{ \sum_{1\le k\le m_1, k\ne U_{n_1}} \bigl[}{}-I\bigl(M_{n_1}(U_{n_1})=d\bigr)I
\bigl(M_{n_1}(k)\ne d-1,d\bigr) \bigr]
\nonumber
\\[-2pt]
&&\hphantom{\quad \le n_1 \E \biggl\{}{} + \sum_{1\le j\le m_1, j\ne U_{n_1}} \bigl[ I
\bigl(M_{n_1}(j)=d\bigr)I\bigl(M_{n_1}(U_{n_1})\ne
d,d+1\bigr)
\nonumber
\\[-2pt]
&&\hphantom{\quad \le n_1 \E \biggl\{{} + \sum_{1\le j\le m_1, j\ne U_{n_1}} \bigl[}{} -I\bigl(M_{n_1}(j)=d\bigr) I\bigl(M_{n_1}(U_{n_1})
\ne d-1, d\bigr) \bigr] \biggr\}^2
\nonumber
\\[-2pt]
&&\quad \le2n_1 m_1 \biggl\{ \sum_{1\le k\le m_1}
\E \bigl[ I\bigl(M_{n_1}(U_{n_1})=d+1\bigr)I
\bigl(M_{n_1}(k)\ne d-1, d\bigr)
\\[-2pt]
&&\hphantom{\quad \le2n_1 m_1 \biggl\{ \sum_{1\le k\le m_1}
\E \bigl[}{}-I\bigl(M_{n_1}(U_{n_1})=d\bigr)I
\bigl(M_{n_1}(k)\ne d-1,d\bigr) \bigr]^2
\nonumber
\\[-2pt]
&&\hphantom{\quad \le2n_1 m_1 \biggl\{}{}+ \sum_{1\le j\le m_1} \E \bigl[ I
\bigl(M_{n_1}(j)=d\bigr)I\bigl(M_{n_1}(U_{n_1})\ne
d,d+1\bigr)
\nonumber
\\[-2pt]
&&\hphantom{\quad \le2n_1 m_1 \biggl\{{}+ \sum_{1\le j\le m_1} \E \bigl[}{} -I\bigl(M_{n_1}(j)=d\bigr) I\bigl(M_{n_1}(U_{n_1})
\ne d-1, d\bigr) \bigr]^2 \biggr\}
\nonumber
\\[-2pt]
&&\quad \le4n_1 m_1 \bigl\{m_1 \bigl[ \P
\bigl(M_{n_1}(U_{n_1})=d+1\bigr) + \P \bigl(M_{n_1}(U_{n_1})=d
\bigr) \bigr] +2m_1 \P\bigl(M_{n_1}(1)=d\bigr) \bigr\}
\nonumber
\\[-2pt]
&&\quad \le c_d n_1 m_1^2 \bigl[
\P(B_{n_1-1, \sfrac{1}{m_1}}=d-1 \mbox{ or } d)+\P(B_{n_1,\sfrac{1}{m_1}}=d) \bigr].\nonumber
\end{eqnarray}
Next, let
\begin{eqnarray*}
b_{n_1, m_1}(j,k):= M_{n_1}(j) I\bigl(M_{n_1}(j)\ne d,
d+1\bigr)I\bigl(M_{n_1}(k)= d\bigr).
\end{eqnarray*}
By the same argument as for $\Var ( \sum_{1\le j\ne k\le
m_1}a_{n_1-1, m_1}(j,k)  )$,
%
\begin{eqnarray}
\label{star2} &&\Var \biggl( \sum_{1\le j\ne k\le m_1}b_{n_1-1, m_1}(j,k)
\biggr)
\nonumber
\\[-2pt]
&&\quad \le n_1 \E \biggl\{ \sum_{1\le k\le m_1, k\ne U_{n_1}} \bigl[
\bigl(M_{n_1}(U_{n_1})-1\bigr)I\bigl(M_{n_1}(U_{n_1})
\ne d+1, d+2\bigr)I\bigl(M_{n_1}(k)=d\bigr)
\nonumber
\\[-2pt]
&&\hphantom{\quad \le n_1 \E \biggl\{ \sum_{1\le k\le m_1, k\ne U_{n_1}} \bigl[}{} - M_{n_1}(U_{n_1})I\bigl(M_{n_1}(U_{n_1})
\ne d, d+1\bigr)I\bigl(M_{n_1}(k)=d\bigr) \bigr]
\nonumber
\\[-2pt]
&&\hphantom{\quad \le n_1 \E \biggl\{ }{} + \sum_{1\le j\le m_1, j\ne U_{n_1}} \bigl[ M_{n_1}(j)I
\bigl(M_{n_1}(j)\ne d, d+1\bigr) I\bigl(M_{n_1}(U_{n_1})=d+1
\bigr)
\nonumber
\\[-2pt]
&&\hphantom{\quad \le n_1 \E \biggl\{ {} + \sum_{1\le j\le m_1, j\ne U_{n_1}} \bigl[} -M_{n_1}(j)I\bigl(M_{n_1}(j)\ne d, d+1\bigr) I
\bigl(M_{n_1}(U_{n_1})=d\bigr) \bigr] \biggr\}^2
\\[-2pt]
&&\quad \le c_d n_1 m_1 \E \biggl\{ \sum
_{1\le k\le m_1, k\ne U_{n_1}} \bigl[\P(B_{n_1-1,\sfrac{1}{m_1}}=d) \E(1+B_{n_1-d-1, \afrac{1}{m_1-1}})^2
\bigr]
\nonumber
\\[-2pt]
&&\hphantom{\quad \le c_d n_1 m_1 \E \biggl\{}{}+ \sum_{1\le j\le m_1, j\ne U_{n_1}} \bigl[\P(B_{n_1-1, \sfrac
{1}{m_1}}=d) \E
B_{n_1-d-1, \afrac{1}{m_1-1}}^2\nonumber\\[-2pt]
&&\hphantom{\quad \le c_d n_1 m_1 \E \biggl\{{}+ \sum_{1\le j\le m_1, j\ne U_{n_1}} \bigl[}{}+\P(B_{n_1-1, \sfrac
{1}{m_1}}=d-1)\E B_{n_1-d, \afrac{1}{m_1-1}}^2
\bigr] \biggr\}
\nonumber
\\[-2pt]
&&\quad \le c_d n_1 m_1^2 \biggl[
\biggl( \frac{n_1}{m_1} +\biggl(\frac
{n_1}{m_1}\biggr)^2
\biggr) \P(B_{n_1-1,\sfrac{1}{m_1}}=d-1) + \biggl( 1 +\biggl(\frac{n_1}{m_1}
\biggr)^2 \biggr) \P(B_{n_1-1,\sfrac{1}{m_1}}=d) \biggr].\nonumber
\end{eqnarray}
From \eqref{occup-9} and the bounds \eqref{star1} and \eqref{star2},
%
\begin{eqnarray}
\label{occup-10} &&\Var\bigl(\E\bigl(I\bigl(V-V'=1\bigr)|V\bigr)\bigr)\nonumber
\\[-2pt]
&&\quad \le\frac{c_d}{n_1} \biggl(1+\biggl(\frac{n_1}{m_1}\biggr)^2
\biggr) \bigl[ \P(B_{n_1, \sfrac
{1}{m_1}}=d-1 \mbox{ or } d)+\P(B_{n_1+1, \sfrac{1}{m_1}}=d)
\bigr]
\\[-2pt]
&&\quad \le\frac{c_d}{n_1} \biggl(1+\biggl(\frac{n_1}{m_1}\biggr)^2
\biggr) \P(B_{n_1, \sfrac
{1}{m_1}}=d-1 \mbox{ or } d).\nonumber
\end{eqnarray}
The last inequality follows from
\begin{eqnarray*}
\P(B_{n_1+1,\sfrac{1}{m_1}}=d)\leq c_d \P(B_{n_1, \sfrac{1}{m_1}}=d)
\end{eqnarray*}
by writing our these probabilities explicitly.

By the same argument as in proving \eqref{occup-9}, \eqref{star1} and
\eqref{star2},
%
\begin{eqnarray}
\label{occup-11} &&\Var\bigl(\E\bigl(I\bigl(V-V'=-1\bigr)|V\bigr)
\bigr)\nonumber
\\
&&\quad \le\frac{2}{m_1^2 n_1^2} \biggl[ \Var \biggl( (d+1) \sum
_{1\le
j\ne k\le m_1}I\bigl(M_{n_1}(j)=d+1\bigr)I
\bigl(M_{n_1}(k)\ne d-1, d\bigr) \biggr)\nonumber
\\
& &\hphantom{\quad \le\frac{2}{m_1^2 n_1^2} \biggl[{}} + \Var \biggl( \sum_{1\le j\ne k\le m_1}
M_{n_1}(j) I\bigl(M_{n_1}(j)\ne d, d+1\bigr) I
\bigl(M_{n_1}(k)=d-1\bigr) \biggr) \biggr]
\\
&&\quad \le\frac{c_d}{n_1} \P(B_{n_1, \sfrac{1}{m_1}}=d \mbox{ or } d+1)\nonumber
\\
&&\qquad {} + \frac{c_d}{n_1} \biggl[ \biggl( \frac{n_1}{m_1} +\biggl(
\frac
{n_1}{m_1}\biggr)^2 \biggr) \P(B_{n_1,\sfrac{1}{m_1}}=d-2) +
\biggl( 1 +\biggl(\frac{n_1}{m_1}\biggr)^2 \biggr)
\P(B_{n_1,\sfrac{1}{m_1}}=d-1) \biggr].\nonumber
\end{eqnarray}
Applying Lemma~\ref{6-l1} with \eqref{occup-8a}, \eqref{occup-10}
and \eqref
{occup-11}, we obtain
\begin{eqnarray*}
&&d_{\mathrm{TV}}\bigl(\mathcal{L}(V), \mathcal{L}(V+1)\bigr)
\\
&&\quad \le\frac{c_d}{\sqrt{n_1}}\frac{1}{(1+\sfrac{m_1}{n_1})\P
(B_{n_1,\sfrac{1}{m_1}}=d) } \\
&&\qquad {}\times\biggl\{ \biggl(\sqrt{
\frac{n_1}{m_1}}+\frac{n_1}{m_1}\biggr)\sqrt{\P (B_{n_1,\sfrac{1}{m_1}}=d-2)}
\\
& &\hphantom{\qquad {}\times\biggl\{}{}+\biggl(1+\frac{n_1}{m_1}\biggr)\sqrt{\P(B_{n_1,\sfrac{1}{m_1}}=d-1
\mbox{ or } d)}\\
& &\hphantom{\qquad {}\times\biggl\{}{} +\sqrt{\P(B_{n_1,\sfrac{1}{m_1}}=d+1) } \biggr\}
\\
&&\quad \le c_d \biggl(1+\sqrt{\frac{n_1}{m_1}}\biggr)
\frac{1}{\sqrt{m_1 (\sfrac
{n_1}{m_1})^d (1-\sfrac{1}{m_1})^{n_1-d} }}.
\end{eqnarray*}
The last inequality was obtained by writing out the binomial
probabilities explicitly. For example,
\begin{eqnarray*}
&&\frac{c_d}{\sqrt{n_1}}\frac{\sqrt{\sfrac{n_1}{m_1}}+\sklfrac
{n_1}{m_1}}{1+\sfrac{m_1}{n_1}} \frac{\sqrt{\P(B_{n_1,\sfrac{1}{m_1}}=d-2)}}{\P(B_{n_1,\sfrac
{1}{m_1}}=d)}
\\
&&\quad =\frac{c_d}{\sqrt{n_1}}\frac{\sqrt{\sfrac{n_1}{m_1}}+\sklfrac
{n_1}{m_1}}{1+\sfrac{m_1}{n_1}} \frac{\sqrt{{n_1 \choose d-2} m_1^{-(d-2)} (1-\sfrac
{1}{m_1})^{n_1-(d-2)} }}{{n_1 \choose d}m_1^{-d}(1-\sfrac
{1}{m_1})^{n_1-d}}
\\
&&\quad \leq\frac{c_d}{\sqrt{n_1}}\frac{\sqrt{\sfrac{n_1}{m_1}}+\sklfrac
{n_1}{m_1}}{1+\sfrac{m_1}{n_1}} \frac{\sqrt{(\sfrac{n_1}{m_1})^{d-2} (1-\sfrac{1}{m_1})^{n_1-d}
}}{(\sfrac{n_1}{m_1})^d(1-\sfrac{1}{m_1})^{n_1-d}}
\\
&&\quad \leq\frac{c_d}{\sqrt{n_1}}\frac{\sqrt{\sfrac{n_1}{m_1}}+\sfrac
{n_1}{m_1}}{1+\sfrac{m_1}{n_1}} \frac{\sfrac{m_1}{n_1}}{\sqrt{(\sfrac{n_1}{m_1})^d(1-\sfrac
{1}{m_1})^{n_1-d}}}
\\
&&\quad \leq c_d\biggl(1+\sqrt{\frac{n_1}{m_1}}\biggr)
\frac{1}{\sqrt{m_1 (\sfrac
{n_1}{m_1})^d (1-\sfrac{1}{m_1})^{n_1-d} }}.
\end{eqnarray*}
From \eqref{occup-8b}, \eqref{1319} and $n\le2m \log m$ in \eqref{occup-3},
we have
%
\begin{equation}
\label{dagger1} n_1\asymp n,\qquad  m_1\asymp m,\qquad
\frac{n_1-d}{m_1^2}\le c_d,
\end{equation}
and hence,
%
\begin{equation}
\label{dagger2} \biggl(1-\frac{1}{m_1}\biggr)^{n_1-d}\ge
\frac{c_d}{(1+\sfrac
{1}{m_1})^{n_1-d}}\ge\frac{c_d}{\mathrm{e}^{(n_1-d)/m_1}}.
\end{equation}
By \eqref{occup-8d}, \eqref{dagger1} and \eqref{dagger2},
%
\begin{eqnarray}
\label{dagger3} &&d_{\mathrm{TV}}\bigl(\mathcal{L}(V), \mathcal{L}(V+1)\bigr)\nonumber\\
&&\quad \le
c_d \biggl(1+\sqrt{\frac
{n_1}{m_1}}\biggr) \frac{1}{ \sigma\sqrt{\mathrm{e}^{n/m} (1-\sfrac
{1}{m_1})^{n_1-d} }}
\\
&&\quad \le\frac{c_d(1+\sqrt{n/m})}{\sigma} \sqrt{\exp\biggl(\frac
{n_1-d}{m_1}-\frac{n}{m}
\biggr)}\le\frac{c_d(1+\sqrt{n/m})}{\sigma}.\nonumber
\end{eqnarray}
This, together with \eqref{occup-5}, proves that the second term on the
right-hand side of \eqref{occup-7} is bounded by
$c_d(1+(n/m)^{5/2})/\sigma$. Therefore, \eqref{occup-6} is proved.
\end{pf}

\ignore{

\subsubsection{Lightbulb process}
We consider the lightbulb process studied by Goldstein and Zhang \cite
{GoZh11}, to which we refer for the history of this problem. There are
$n$ lightbulbs. Initially these lightbulbs are all in the off status.
On days $r=1,2,\dots,n$, we change the status of $r$ bulbs from off to
on, or from on to off. These $r$ bulbs are chosen uniformly from the
$n$ bulbs and independent of the choices of the other days. Let $X$
denote the number of bulbs on after $n$ days. A Berry-Esseen bound was
proved in \cite{GoZh11} on the Kolmogorov distance between $\mathcal
{L}(X)$ and $N(\tilde{\mu}, \tilde{\sigma}^2)$ where $\E X=\tilde
{\mu}, \Var(X)=\tilde{\sigma}^2$. Here we derive a bound on the
total variation distance between $\mathcal{L}(X)$ and $N^d(\tilde{\mu
},\tilde{\sigma}^2)$, assuming $n=4l$ for some positive integer $l$
for simplicity. It is easy to see that $X$ must be an even number and
$0\le X\le n$. Define $S=X/2$. Then $\mu=\E S=\tilde{\mu}/2$ and
$\sigma^2=\Var(S) =\tilde{\sigma}^2/4$. We have the following proposition.
%
\begin{prop}\label{6-p2}
With $S$ defined above,
%
\begin{equation}
\label{6-p2-1} d_{\mathrm{TV}}\bigl(\mathcal{L}(S), N^d\bigl(\mu,
\sigma^2\bigr)\bigr) \le c/\sqrt{n}
\end{equation}
where $c$ is an absolute constant.
\end{prop}
%
\begin{rem}
In Goldstein and Xia \cite{GoXi10}, a clubbed binomial approximation
for $X$ with rate $\sqrt{n}e^{-(n+1)/3}$ was proved. Their bound,
together with a bound on the total variation distance between the
clubbed binomial distribution and the discretized normal distribution,
results in \eqref{6-p2-1}. Here, we give a direct proof of \eqref{6-p2-1}
by applying Corollary~\ref{6-c3}.
\end{rem}
\begin{pf}
Define $S^s=X^s/2$ where $X^s$ has the $X$-size biased distribution and
is coupled with $X$. Then $(S,S^s, \mu)$ is a Stein coupling. $X^s$
can be constructed in the following way by \cite{GoZh11}. Let $\mathbf
{X}=\{X_{rk}: r,k=1,2,\ldots,n\}$ be a collection of switch variables
with distribution
\begin{eqnarray*}
\P(X_{r1}=e_1,\ldots, X_{rn}=e_n)=
\begin{cases} 1/{n \choose{r}} & \mbox{ if } e_1,\ldots,
e_n\in\{0,1\} \mbox{ and } e_1+\cdots+e_n=r
\\
0& \mbox{ otherwise } \end{cases} %
\end{eqnarray*}
and the collections $\{X_{r1},\ldots, X_{rn}\}$ are independent for
$r=1,\ldots,n$. Let $X_i=\sum_{r=1}^n X_{ri} \mbox{ mod } 2$ for
each $i\in\{1,\ldots,n\}$, then the number of bulbs on after $n$ days
is $X=\sum_{i=1}^n X_i$. Let $\mathbf{X}^i$ be given from $\mathbf
{X}$ as follows. If $X_i=1$, then $\mathbf{X}^i=\mathbf{X}$.
Otherwise, with $J^i$ uniformly chosen from $\{j: X_{n/2,j}=1-X_{n/2,
i}\}$, independent of $\{X_{rk}: r\ne n/2, k=1,\ldots, n\}$, let
$\mathbf{X}^i=\{X_{rk}^i: r,k=1,\ldots,n\}$ where
\begin{eqnarray*}
X_{rk}^i= %
\begin{cases} X_{rk} &
r\ne n/2
\\
X_{n/2, k} & r=n/2, k\notin\bigl\{i,J^i\bigr\}
\\
X_{n/2, J^i} & r=n/2, k=i
\\
X_{n/2, i} &r=n/2, k=J^i \end{cases} %
\end{eqnarray*}
and let $X^i=\sum_{k=1}^n X_k^i$ where
\begin{eqnarray*}
X_k^i=\Biggl(\sum_{r=1}^n
X_{rk}^i\Biggr) \mbox{ mod } 2.
\end{eqnarray*}
It was proved in \cite{GoZh11} that with $I$ uniformly chosen from $\{
1,2,\ldots,n\}$ and independent of $\mathbf{X}$, the mixture
$X^I=X^s$ has the $X$-size biased distribution. From the construction
of $\mathbf{X}^i$,
\begin{eqnarray*}
X^s-X=2I(X_I=0, X_{J^I}=0),
S^s-S=I(X_I=0, X_{J^I}=0).
\end{eqnarray*}
From Lemma~3.3 in \cite{GoZh11} and the facts that $\mu=O(n), \sigma
^{-2}=O(1/n)$, the first three terms in the bound (\ref{6-c3-1}) are
of order $O(1/\sqrt{n})$. Therefore, we only need to prove that
$d_{\mathrm{TV}} (\mathcal{L}(S|\mathcal{F}), \mathcal{L}(S+1|\mathcal
{F}))=O(1/\sqrt{n})$ where $\mathcal{F}$ is a $\sigma$-field such
that $\sigma(X^s-X)\subset\mathcal{F}$. If $X_I=1$, we define
$J^I=I$. Then $X^s-X$ is determined by $\mathcal{F}:=\{I, J^I, X_{rk}:
r\in\{1,\ldots,n\}, k\in\{I, J^I\}\}$. In the case $J^I\ne I$, we
denote $J^I$ by $J$. Given any realization of $\mathcal{F}$, we define
a new $n$ by $n-2$ random matrix $\mathbf{Y}=\{Y_{rk}:r=1,\ldots, n,
k=1,\ldots, n-2\}$ where
\begin{eqnarray*}
&\P(Y_{r1}=e_1,\ldots, Y_{r,n-2}=e_{n-2})
\\
&= %
\begin{cases} 1/{n \choose{r-X_{rI}-X_{rJ}}}
& \mbox{ if } e_1,\ldots, e_{n-2}\in \{0,1\} \mbox{ and }
e_1+\cdots+e_{n-2}=r-X_{rI}-X_{rJ}
\\
0& \mbox{ otherwise } \end{cases} %
\end{eqnarray*}
and the collections $\{Y_{r1},\ldots, Y_{r,n-2}\}$ are independent for
$r=1,\ldots,n$. Let $Y_i=\sum_{r=1}^n Y_{ri}$ mod $2$ for each $i\in
\{1,\ldots, n-2\}$ and $Y=\sum_{i=1}^{n-2}Y_i$. Then
\begin{eqnarray*}
d_{\mathrm{TV}} \bigl(\mathcal{L}(S|\mathcal{F}), \mathcal{L}(S+1|\mathcal {F})
\bigr)=d_{\mathrm{TV}}\bigl(\mathcal{L}(V),\mathcal{L}(V+1)\bigr)
\end{eqnarray*}
where $V=Y/2$. We bound $d_{\mathrm{TV}}(\mathcal{L}(V),\mathcal{L}(V+1))$ by
applying Lemma~\ref{6-l1}. Note that because $X_{rI}\ne X_{rJ}$, there
are $\frac{n}{2}-1$ ones in the $\frac{n}{2}$th row of $\mathbf{Y}$.
We uniformly and independently choose one of these ones (in column
$I^\dagger$) and exchange it with a uniformly and independently chosen
zero (in column $J^\dagger$) in the $\frac{n}{2}$th row. By doing
this, we change the values of $Y_{I^\dagger}$ and $Y_{J^\dagger}$.
Define $Y'$ to be the sum of $Y_i: i\in\{1,\ldots,n-2\}$ after the
above exchange. Then $(Y,Y')$ is an exchangeable pair. Define
$V'=Y'/2$, then $(V,V')$ is also an exchangeable pair and
\begin{eqnarray*}
&I\bigl(V-V'=1\bigr)=I(Y_{I^\dagger}=1, Y_{J^\dagger}=1)
\\
&=\sum_{i,j=1 i\ne j}^{n-2} I(Y_i=1,
Y_j=1) I\bigl(I^\dagger=i, J^\dagger =j\bigr)
\\
&=\sum_{i,j=1 i\ne j}^{n-2} I(Y_i=1,
Y_j=1, Y_{n/2, i}=1, Y_{n/2,
j}=0) I
\bigl(I^\dagger=i, J^\dagger=j\bigr).
\end{eqnarray*}
Therefore,
\begin{eqnarray*}
\E\bigl(I\bigl(V-V'=1\bigr)|\mathbf{Y}\bigr)=\frac{4}{(n-2)^2}
\sum_{i,j=1 i\ne j}^{n-2} I(Y_i=1,
Y_j=1, Y_{n/2, i}=1, Y_{n/2,j}=0).
\end{eqnarray*}
Following essentially the same calculation in pages 886-7 in \cite
{GoZh11}, we can prove that
\begin{eqnarray*}
\Var\bigl(\E\bigl(I\bigl(V-V'=1\bigr)|V\bigr)\bigr)\le\Var\bigl(\E
\bigl(I\bigl(V-V'=1\bigr)|\mathbf{Y}\bigr)\bigr)=O(1/n)
\end{eqnarray*}
and
\begin{eqnarray*}
\P\bigl(V-V'=1\bigr) =\E I\bigl(V-V'=1\bigr)=O(1).
\end{eqnarray*}
Similarly,
\begin{eqnarray*}
\Var\bigl(\E\bigl(I\bigl(V-V'=-1\bigr)|V\bigr)\bigr)\le\Var\bigl(
\E\bigl(I\bigl(V-V'=-1\bigr)|\mathbf{Y}\bigr)\bigr)=O(1/n).
\end{eqnarray*}
Therefore, by Lemma~\ref{6-l1},
\begin{eqnarray*}
d_{\mathrm{TV}}\bigl(\mathcal{L}(V), \mathcal{L}(V+1)\bigr)=O(1/\sqrt{n}).
\end{eqnarray*}
The case $J^I=I$ can be proved similarly. This completes the proof.
\end{pf}

}

\section{Proof of Theorem \texorpdfstring{\protect\ref{6-t1}}{1.3}}\label{sec3}

From the definition of $N^d(\mu, \sigma^2)$, (\ref{6.1-0}), we have
%
\begin{equation}
\label{stastasta} d_{\mathrm{TV}}\bigl(\mathcal{L}(S), N^d\bigl(\mu,
\sigma^2\bigr)\bigr)= \sup_{h\in\mathcal{H}} \bigl|\E h(S)-\E
h(Z_{\mu,\sigma^2})\bigr|,
\end{equation}
where $Z_{\mu,\sigma^2}$ is a Gaussian variable with mean $\mu$ and
variance $\sigma^2$ and
%
\begin{equation}
\label{6-t1-3} \mathcal{H}=\biggl\{h:\mathbb{R}\rightarrow\{0,1\}, h(x)=h(z)
\mbox{ when } z-\frac{1}{2} \le x< z+\frac{1}{2} \mbox{ for } z\in
\mathbb{Z}\biggr\}.
\end{equation}
For each $h\in\mathcal{H}$, consider the following Stein equation,
%
\begin{equation}
\label{6-t1-4} \sigma^2 f'(s)-(s-\mu)f(s)=h(s)-\E
h(Z_{\mu,\sigma^2}).
\end{equation}
It is known (see \cite{ChSh05}) that there exists a bounded solution
$f_h$ to (\ref{6-t1-4}) and
%
\begin{equation}
\label{6-t1-5} \Vert f_h\Vert \le\sqrt{\frac{\pi}{2}}
\frac{1}{\sigma}, \qquad \bigl\Vert f_h'\bigr\Vert\le \frac{2}{\sigma^2}.
\end{equation}
By \eqref{stastasta} and \eqref{6-t1-4},
%
\begin{equation}
\label{6-t1-6} d_{\mathrm{TV}}\bigl(\mathcal{L}(S), N^d\bigl(\mu,
\sigma^2\bigr)\bigr)= \sup_{h\in\mathcal{H}} \bigl|\E
\sigma^2 f_h'(S)-\E(S-\mu)
f_h(S)\bigr|.
\end{equation}
Since $(S,S',G)$ satisfies (\ref{6-d1-1}), we have
%
\begin{eqnarray}
\label{6-t1-7} && \E\sigma^2 f_h'(S)-\E(S-
\mu) f_h (S)\nonumber
\\
&&\quad =\E\sigma^2 f_h'(S) -\E\bigl\{ G
f_h\bigl(S'\bigr)-G f_h(S)\bigr\}
\nonumber\\[-8pt]\\[-8pt]
&&\quad = \E\sigma^2 f_h'(S) -\E GD
f_h'(S) -\E G\int_0^D
\bigl(f_h'(S+t)-f_h'(S)
\bigr) \,\mathrm{d}t\nonumber
\\
&&\quad = R_1-R_2,\nonumber
\end{eqnarray}
where
\begin{eqnarray*}
R_1&=&\E f_h'(S) \bigl(
\sigma^2-GD\bigr),
\\
R_2&=&\E G\int_0^D
\bigl(f_h'(S+t)-f_h'(S)
\bigr)\,\mathrm{d}t.
\end{eqnarray*}
From (\ref{6-d1-1}), $\E GD=\sigma^2$. This, along with (\ref
{6-t1-5}), yields
%
\begin{equation}
\label{6-t1-10} |R_1|\le\frac{2\sqrt{\Var(\E(GD|S))}}{\sigma^2}.
\end{equation}
For $R_2$, since $f_h$ solves (\ref{6-t1-4}),
%
\begin{eqnarray}
\label{6-t1-11} R_2&=&\E G\int_0^D
\frac{1}{\sigma^2}\bigl((S+t-\mu) f_h(S+t)-(S-\mu)
f_h(S)+h(S+t)-h(S)\bigr)\,\mathrm{d}t\nonumber
\\[-8pt]\\[-8pt]
&=&\E G\int_0^D \frac{1}{\sigma^2}
\bigl(tf_h(S+t)+(S-\mu ) \bigl(f_h(S+t)-f_h(S)
\bigr)+h(S+t)-h(S)\bigr)\,\mathrm{d}t.\nonumber
\end{eqnarray}
Using (\ref{6-t1-5}), the first two summands in \eqref{6-t1-11} can be
bounded by
\begin{eqnarray*}
\sqrt{\frac{\pi}{8}} \frac{1}{\sigma^3}\E\bigl|GD^2\bigr|+
\frac{1}{\sigma
^4} \E\bigl|GD^2 (S-\mu)\bigr|.
\end{eqnarray*}\eject\noindent
From \eqref{6-t1-3} and \eqref{1.2},
%
\begin{eqnarray}
\label{1101} &&\frac{1}{\sigma^2} \biggl|\E G\int_0^D
\bigl(h(S+t)-h(S)\bigr) \,\mathrm{d}t \biggr|\nonumber
\\
&&\quad = \frac{1}{\sigma^2} \biggl| \E G\int_{-\infty}^\infty
\bigl[I(0\le t\le D)-I(D\le t<0)\bigr] \bigl[\E^{\mathcal{F}} \bigl(h(S+t)-h(S)
\bigr)\bigr]\,\mathrm{d}t \biggr|\nonumber
\\
&&\quad \le \frac{1}{\sigma^2} \E|G|\int_{-\infty}^\infty \bigl|
I(0\le t\le D)-I(D\le t<0) \bigr| \bigl| \E^{\mathcal{F}} \bigl(h(S+t)-h(S)\bigr) \bigr| \,\mathrm{d}t
\\
&&\quad \le \frac{1}{\sigma^2} \E|G|\int_{-\infty}^\infty \bigl|
I(0\le t\le D)-I(D\le t<0) \bigr| \biggl(|t|+\frac{1}{2}\biggr) d_{\mathrm{TV}}
\bigl( \mathcal {L}(S|\mathcal{F}), \mathcal{L}(S+1|\mathcal{F}) \bigr)\, \mathrm{d}t\nonumber
\\
&&\quad \le \frac{1}{2\sigma^2} \E \bigl[ \bigl(\bigl|GD^2\bigr|+|GD|\bigr)
d_{\mathrm{TV}}\bigl( \mathcal {L}(S|\mathcal{F}), \mathcal{L}(S+1|\mathcal{F})
\bigr) \bigr] .\nonumber
\end{eqnarray}
Therefore,
%
\begin{eqnarray}
\label{6-t1-12} |R_2|&\le&\sqrt{\frac{\pi}{8}} \frac{1}{\sigma^3}
\E\bigl|GD^2\bigr|+\frac
{\sqrt{\E G^2 D^4}}{\sigma^3}\nonumber
\\[-8pt]\\[-8pt]
&&{} +\frac{1}{2\sigma^2} \E \bigl[ \bigl(\bigl|GD^2\bigr|+ |GD|\bigr)
d_{\mathrm{TV}} \bigl(\mathcal {L}(S|\mathcal{F}), \mathcal{L}(S+1|\mathcal{F})
\bigr) \bigr].\nonumber
\end{eqnarray}
The theorem is proved by using (\ref{6-t1-6}), (\ref{6-t1-7}) and the
bounds (\ref{6-t1-10}), (\ref{6-t1-12}).

\section*{Acknowledgements}
This work is based on part of the Ph.D. thesis of the author. The
author is thankful to his advisor, Louis H.Y. Chen, for his guidance
and helpful discussions. The author would also like to thank a referee
and the Associate Editor whose suggestions have significantly improved
the presentation of this paper. This work is partially supported by
Grant C-389-000-010-101 and Grant C-389-000-012-101 at the National
University of Singapore.




\printhistory

\end{document}